
\newcommand{\thetitle}{Ordered Risk Minimization: Learning More from Less Data}

\documentclass[todo=true, usebiblatex=false, mode=cdc, color=true]{kulconf}  
\configuretemplate{P. Coppens}{\thetitle}

\usepackage[mode=arxiv]{arxivix}
\usepackage{tikzutils}
\usepackage{booktabs, multirow, tabularx}
\usepackage{array}
\usepackage[ruled, linesnumbered]{algorithm2e}

\newcommand{\major}[1]{\mathcal{M}^{#1}}
\newcommand{\bftab}{\fontseries{b}\selectfont}


\addbibresource{bibliography.bib}



\title{\thetitle}
\author{Peter Coppens and Panagiotis Patrinos$^\dagger$
\thanks{$^\dagger$P. Coppens and P. Patrinos are with the Department of Electrical
Engineering (ESAT-STADIUS), KU Leuven, Kasteelpark Arenberg
10, 3001 Leuven, Belgium.
        {Email: \tt\footnotesize peter.coppens@kuleuven.be, panos.patrinos@kuleuven.be}}%
\thanks{This work was supported by: the Research Foundation
Flanders (FWO) PhD grant 11E5520N and research projects G081222N, G033822N, G0A0920N;
European Union's Horizon 2020 research and innovation programme under the Marie Skłodowska-Curie grant agreement No. 953348.}%
}%



\begin{document}

\maketitle
\firstpage

\begin{abstract}
    We consider the worst-case expectation of a permutation invariant ambiguity set of discrete distributions
    as a proxy-cost for data-driven expected risk minimization. For this framework, we coin the term \emph{ordered risk minimization}
    to highlight how results from order statistics inspired the proxy-cost. Specifically, we show how such costs serve as 
    point-wise high-confidence upper bounds of the expected risk. The confidence level can be determined tightly for any sample size. 
    Conversely we also illustrate how to calibrate the size of the ambiguity set such that the high-confidence upper 
    bound has some user specified confidence. This calibration procedure notably supports $\phi$-divergence based ambiguity sets. 
    Numerical experiments then illustrate how the resulting scheme both 
    generalizes better and is less sensitive to tuning parameters compared to the empirical risk minimization approach. 
\end{abstract}

\begin{pub}
\begin{IEEEkeywords}
\todo*{Placeholder keywords.}
\end{IEEEkeywords}
\end{pub}


\section{Introduction}
The problem of \emph{expected risk minimization} is ubiquitous in machine learning and statistics \cite{Shalev-Shwartz2013,Vapnik1998}.
It is based on the idea that the quality of a model can be assessed by measuring its expected error, quantified by some loss function. The expectation 
should be evaluated with respect to the data-generating distribution. However, in practice, only samples are available. So the expectation 
needs to be replaced with a data-driven proxy, which aggregates the data. The common solution is \emph{empirical risk minimization}
or the \emph{sample average approach (SAA)}, where one takes an average over the losses at the sampled data points. 

Despite its advantages, the SAA often exhibits excessive sensitivity to the specific data realizations, particularly in high-dimensional settings \cite[\S8.H]{Royset2022}, 
leading to diminished generalization capabilities of the model. To address this, researchers have turned to \emph{Distributionally Robust Optimization (DRO)}, aiming to robustify against disparities 
between the empirical and true data-generating distributions.

In DRO, a worst-case expectation with respect to distributions in an \emph{ambiguity set} centered on the empirical distribution serves as a proxy for the true expected risk. 
This set can be based on the Wasserstein distance \cite{Esfahani2018b}, $\phi$-divergences \cite{Ben-Tal2013}, hypothesis tests \cite{Bertsimas2009b}, and others. 
See \cite{Rahimian2019,Lin2022} for recent surveys. However, DRO faces challenges when determining the ambiguity set's size. 
Current approaches rely on concentration inequalities \cite{Delage2010} or asymptotic bounds \cite{Ben-Tal2013}, ensuring that true distribution is contained within the ambiguity 
set with high probability. Unfortunately, this often results in conservatism, caused by either loose constants in the concentration 
inequalities or the shape of the ambiguity set. As an alternative,  
bootstrapping or cross validation techniques are often employed (cf. \cite{Esfahani2018b}).
These can be computationally expensive and lack statistical guarantees for finite samples, similarly to the asymptotic bounds. 
Such guarantees are a requirement in safety-critical applications like control (e.g. constraint tightening in tube-based MPC \cite{Lorenzen2017, Aolaritei2023}).

To address conservativeness issues, \cite{Duchi2021b,Lam2019} focus on bounding the expectation directly as an alternative 
to creating a confidence bound for the entire distribution. 
This mimics the focus on the expectation in the statistical learning framework of \cite[\S1]{Vapnik1998}. However, their bounds are 
asymptotic and therefore also lack strong statistical guarantees. In this paper, we take the first steps towards a finite-sample version of their scheme. 
To achieve this, we draw inspiration from results in \emph{order statistics} \cite{David2003} and \emph{stochastic orders} \cite{Shaked2007} to motivate 
the use of permutation invariant ambiguity sets. Notably, the $\phi$-divergences used in \cite{Duchi2021b,Lam2019,Van2021} produce a specific case. We coin the term \emph{ordered risk minimization}
to emphasize our statistical motivations. We demonstrate how to calibrate the ambiguity set's size to upper bound the true 
expectation with high probability, even when the true distribution does not fall within the ambiguity set. 
This probability serves as an intuitive tuning parameter.

The remainder of the paper continues as follows. We first present some notation, before moving on the the problem statement in \cref{sec:problem}. 
There the proxy costs we use are presented as well as the calibration problem. We present the statistical interpretation of these proxy costs 
as high confidence upper bounds in \cref{sec:statistics} and solve the calibration problem in \cref{sec:calibration}. Numerical experiments are 
then presented in \cref{sec:case-studies}.

\paragraph*{Notation} Let $\Re$ denote the reals\ilarxiv{{ }and $\eRe$ the extended reals}. 
For some convex function $\phi \colon \Re^n \to \ilarxiv{\eRe}\ilpub{\Re}$, let $\phi^*$ denote the convex conjugate\ilarxiv{, $\partial \phi$ the subgradient}\ilarxiv{{ }and
$\dom \phi$ its domain}. 
\ilarxiv{For a set $\set{X}$ let $\iota_{\set{X}}(x) = 0$ if $x \in \set{X}$ and $+\infty$ otherwise be the indicator function of $\set{X}$.}
For integers $a, b$ let $[a, b] = \{a, \dots, b\}$ and $[b] = \{1, \dots, b\}$. Let $[x]_+ = \max(0, x)$. 
For real vectors $x, y \in \Re^n$ we use $\< x, y\>$ to denote the Euclidean inner product\ilarxiv{{ }and $\one_n \in \Re^n$ is the vector of all ones}. 
For a cone $\set{K}$ let $\set{K}^\circ \dfn \{y \colon \<x, y\> \leq 0, \forall x \in \set{K}\}$ denote its polar cone. 
Let $\Pi^n$ denote the permutations of $[n]$ (i.e., all bijections $[n] \to [n]$). We write $\pi x = (x_{\pi(1)}, \dots, x_{\pi(n)})$
for $\pi \in \Pi^n$, $x \in \Re^n$ and 
similarly let $\Pi^n y = \{\pi y \colon \pi \in \Pi^n\}$ denote the orbit of $y$ under $\Pi^n$.
Let $\Re^n_{\uparrow} \dfn \{x \colon x_{1} \leq x_2 \leq \dots \leq x_n\}$ 
denote the monotone cone and $\major{n}$ its polar (cf. \cref{prop:dual-monotone-cone} and \cref{eq:majorization-cone}). 
Let $\Delta^n \dfn \{\mu \colon \sum_{i=1}^n \mu_i = 1, \mu_i \geq 0, i \in [n]\}$ denote the probability simplex.
For a vector $x \in \Re^n$ let $x_{(1)} \leq x_{(2)} \leq \dots \leq x_{(n)}$ be the increasing permutation of the elements of $x$
with $x_{\uparrow} = (x_{(1)}, x_{(2)}, \dots, x_{(n)})$. 
For sets $A$, $B$ let $A + B \dfn \{a + b \colon a \in {A}, b \in {B}\}$
denote the Minkowski sum. 
For random variables $X, Y$ we write $X \deq Y$ to say $X$ is identically distributed to $Y$. Let $\esssup[X]$ 
denote the essential supremum. Let $X \deq \mathrm{U}[\ell, u]$ imply $X$ is uniformly distributed over $[\ell, u]$. For a set $A$, $X \deq \mathrm{U}[A]$ is then
uniformly distributed over $A$. Finally let $X \deq \mathcal{N}(\mu, \Sigma)$ 
denote $X$ that is normally distributed with mean $\mu$ and covariance $\Sigma$.

\section{Problem Statement} \label{sec:problem}
We consider expected risk minimization 
\begin{equation} \label{eq:erm}
    \minimize_{\theta \in \Theta} \quad \E[\ell(\theta, \xi)].
\end{equation}
Here $\Theta \subseteq \Re^{n_\theta}$ and $\xi \colon \Omega \to \Xi \subseteq \Re^{n_\xi}$ is a random vector on a 
probability space $(\Omega, \F, \prob)$. As is common \cite{Vapnik1998,Shalev-Shwartz2013}, we then assume access to 
\emph{independent and identically distributed (iid) samples} $\xi^{(1)}, \dots, \xi^{(n-1)}$. Let $\ell_i(\theta) = \ell(\theta, \xi^{(i)})$ for $i=1, \dots, n-1$ and 
take $\ell_n(\theta)$ such that it upper bounds the cost almost surely. That is $\prob[\ell(\theta, \xi) \leq \ell_n(\theta)] = 1$.
It is assumed that $\ell_n(\theta)$ is finite for any $\theta$. We do so for two reasons: \emph{(i)} an assumption on the tail of the distribution of 
$\ell(\theta, \xi)$ is required to find a confidence interval for the mean \cite{Bahadur1956}; \emph{(ii)} the scheme is simplified considerably. 
An example of a valid bound is $\ell_n(\theta) = \sup_{\xi \in \Xi} \, \ell(\theta, \xi)$. 

We will use a data-driven proxy for the expectation by introducing \emph{permutation invariant ambiguity sets}.
These are subsets $\amb$ of the probability simplex $\Delta^n$ such that, for each $\mu \in \amb$ any permutation of $\mu$ is also in $\amb$. 
The \emph{ordered risk minimization} problem is then: 
\begin{equation} \label{eq:ordered-risk}
    \minimize_{\theta \in \Theta} \quad \sup_{\mu \in \amb} \, \sum_{i=1}^{n} \mu_i \ell_i(\theta). 
\end{equation}
This proxy cost interpolates between the robust case for $\amb = \Delta^n$
and the sample average (including a term associated with $\ell_n(\theta)$) when $\amb = \{\one_n/n\}$.
The interpolation interpretation is also common in DRO \cite{Ben-Tal2013}. To find a good balance, our goal is to select $\amb$ such that, for all $\theta \in \Theta$,
\begin{equation} \label{eq:calibration}
    \prob\left[ \sup_{\mu \in \amb} \, \sum_{i=1}^{n} \mu_i \ell_i(\theta) \geq \E[\ell(\theta, \xi)] \right] \geq 1 - \delta,
\end{equation}
We refer to this problem as the \emph{calibration problem}. It robustifies against disparities between the empirical and true distributions. 
The parameter $\delta$ then serves as an intuitive, user-determined parameter that controls the conservativeness of the method. However, as illustrated by experiments, 
our method is relatively insensitive to the value of $\delta$. 

To find an ambiguity set $\amb$ satisfying \cref{eq:calibration} we need to somehow parametrize it. 
A well known class of permutation invariant ambiguity sets uses $\phi$-divergences \cite{Ben-Tal2013}. 
Let $\phi \colon \Re_+ \to \Re$ be lower semicontinuous, convex and $\phi(1) = 0$. 
Also, let\footnote{We take the 
lower semicontinuous envelope of the terms inside the sum \cite[Def.~6]{Chouzenoux2019} to handle cases where $\nu_i$ equals zero.}
$I_{\phi}(\mu, \nu) \dfn \ssum_{i=1}^{n} \nu_i \phi(\mu_i / \nu_i)$ for all $\mu, \nu \in \Delta^n$. 
A \emph{(centered) $\phi$-divergence ambiguity set} is then
\begin{equation} \label{eq:ambiguity-phi-div}
    \amb_{\alpha} \dfn \left\{ \mu \in \Delta^{n} \colon I_{\phi}\left(\mu, \frac{\one_{n}}{n}\right) = \sum_{i=1}^{n}  \frac{\phi(n \mu_i)}{n} \leq \alpha \right\}.
\end{equation}
In this work we consider two examples: \emph{total variation (TV)} for which $\phi(t) = |t - 1|$ and 
\emph{Kullback Leibler (KL)} divergence for which $\phi(t) = t \log t - t + 1$. However, our method works for any divergence. See \cite{Ben-Tal2013} for more examples.

To calibrate $\amb_\alpha$ the radius $\alpha \in \Re$ should then be the smallest value such that \cref{eq:calibration} still holds. We also provide an alternative parametrization, 
related to a well known bound by Anderson \cite{Anderson1969} and the conditional value-at-risk. 


It is important to note that the constraint in \cref{eq:calibration} is less stringent compared to DRO, which guarantees 
that the supremum in \cref{eq:calibration} acts as a high-confidence upper bound, uniformly over $\theta$\footnote{%
The mean bound will be uniform when 
\begin{equation*}
    \prob\left[ \sup_{\mu \in \amb} \, \sum_{i=1}^{n} \mu_i \ell_{i}(\theta) \geq \E[\ell(\theta, \xi)],\, \forall \theta \in \Theta \right] \geq 1-\delta.
\end{equation*}%
}. 
After all, we never require that the true distribution is contained within $\amb$ (as is the case in \cite{Beck2017}). 
The gap between the point-wise \cref{eq:calibration}
and the uniform equivalent is examined for $\phi$-divergences in \cite{Duchi2021b,Lam2019} in the asymptotic regime. 

We numerically approximate the calibration problem without samples from $\xi$. 
So the parameters of the set $\amb$ only need to be computed once and can be tabulated afterwards. This contrasts the complex derivations
and the resulting conservative constants associated with analytical approaches used to compute the radius of an 
ambiguity set in DRO \cite{Delage2010,Ben-Tal2013,Esfahani2018b}. 
We show experimentally how our calibration of $\amb$ according to \cref{eq:calibration} greatly improves generalization.

\section{Statistical Framework} \label{sec:statistics}
The analysis of this section investigates upper bounds for the mean of a scalar \emph{random variable (rv)} $Z \colon \Omega \to \Re$, defined on some probability space $(\Omega, \F, \prob)$. 
These findings remain applicable to the previous section, when considering $Z = \ell(\theta, \xi)$ for fixed $\theta$. Let $F(z) = \prob[Z \leq z]$ 
denote the \emph{cumulative distribution function (cdf)} of $Z$. 
We assume access to iid samples $Z_{1}, \dots, Z_{n-1}$ and an upper bound denoted as $Z_{n} = \esssup[Z]$ for notational convenience, 
which satisfies $F(Z_{n}) = 1$. 

We then introduce the \emph{coverages}
\begin{equation} \label{eq:coverages}
    W_i = F(Z_{(i)}) - F(Z_{(i-1)}), \quad \forall i \in [n],
\end{equation}
where $-\infty = Z_{(0)} \leq Z_{(1)} \leq Z_{(2)} \leq \dots \leq Z_{(n)}$ denotes
an increasing permutation of $Z_1, \dots, Z_{n}$ called the \emph{order statistics} of $Z$ and
with $Z_{(0)}$ added for convenience.

The coverages will be used to bound the expectation. To do so, we require the following cone in $\Re^n$:
\begin{equation} \label{eq:majorization-cone}
    \major{n} = \left\{ x \colon \sum_{i=1}^k x_i \geq 0, \forall k \in [n-1], \sum_{i=1}^n x_i = 0 \right\}. 
\end{equation}
In \cref{prop:dual-monotone-cone} we prove that it corresponds to the polar of the monotone cone $\Re^n_{\uparrow}$. It has a history in isotonic regression \cite{Barlow1972},
majorization \cite{Steerneman1990} and also describes a stochastic order between discrete distributions \cite[p.~4]{Shaked2007} as we illustrate later.

The expectation bound is then as follows:
\begin{proposition} \label{prop:mean-bound-main}
    Suppose that $\amb \subseteq \Delta^n$ is permutation invariant\footnote{%
        Specifically, for any $\pi \in \Pi^n$ and $\mu \in \amb$, $\pi \mu = (\mu_{\pi(1)}, \dots, \mu_{\pi(n)}) \in \amb$. Moreover we assume sets $\amb$ are Lebesgue measurable.}.
    Take $W$ as in \cref{eq:coverages}. 
    Then, for any rv $Z \colon \Omega \to \Re$ with iid samples $\{Z_i\}_{i=1}^{n-1}$ and $Z_n = \esssup[Z] < +\infty$, 
    \begin{equation*}
        \prob\left[ \sup_{\mu \in \amb} \, \sum_{i=1}^{n} \mu_i Z_{i} \geq \E[Z] \right] \geq \prob[W \in \amb + \major{n}]. 
    \end{equation*}
\end{proposition}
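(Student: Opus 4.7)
My plan is to show that the event $\{W \in \amb + \major{n}\}$ is contained, almost surely, in the event $\{\sup_{\mu \in \amb} \sum_i \mu_i Z_i \geq \E[Z]\}$; the proposition then follows by monotonicity of probability. The argument rests on two observations: a deterministic upper bound on $\E[Z]$ via the coverages, and a polar-cone/permutation-invariance argument to convert a decomposition $W = \mu + v$ into a feasible point of the supremum.

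The first step is to show the (sample-path) inequality
\begin{equation*}
    \E[Z] \;\leq\; \sum_{i=1}^n W_i Z_{(i)} \quad \text{almost surely.}
\end{equation*}
Fix a realization of the samples. Since $Z_{(n)} = \esssup[Z]$ and $Z_{(0)} = -\infty$, the intervals $(Z_{(i-1)}, Z_{(i)}]$ form a partition of the support of $Z$ almost surely. On each interval, $Z \leq Z_{(i)}$, and the $F$-mass of the interval is exactly $W_i = F(Z_{(i)}) - F(Z_{(i-1)})$ by definition \cref{eq:coverages}. Decomposing the expectation over the partition and using these two facts yields the claim. Note that $\E[Z]$ is deterministic, so this is just a pointwise bound for each sample realization.

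The second step handles the ambiguity set. Suppose $W \in \amb + \major{n}$, so we can write $W = \mu + v$ with $\mu \in \amb$ and $v \in \major{n}$. Because $Z_\uparrow = (Z_{(1)}, \dots, Z_{(n)}) \in \Re^n_{\uparrow}$ and $\major{n} = (\Re^n_{\uparrow})^\circ$ (invoking \cref{prop:dual-monotone-cone}), we get $\<v, Z_\uparrow\> \leq 0$, hence
\begin{equation*}
    \sum_{i=1}^n \mu_i Z_{(i)} \;=\; \sum_{i=1}^n W_i Z_{(i)} - \<v, Z_\uparrow\> \;\geq\; \sum_{i=1}^n W_i Z_{(i)} \;\geq\; \E[Z],
\end{equation*}
with the last inequality from step one. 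Finally, let $\pi \in \Pi^n$ be any permutation that sorts the samples so that $Z_{(i)} = Z_{\pi(i)}$. Then $\sum_i \mu_i Z_{(i)} = \sum_j (\mu_{\pi^{-1}(j)}) Z_j$, and by permutation invariance the reordered vector $\mu \circ \pi^{-1}$ lies in $\amb$, so this sum is dominated by $\sup_{\mu' \in \amb} \sum_j \mu'_j Z_j$. This completes the inclusion of events, and the probability inequality follows.

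The main obstacle I anticipate is the first step: although the bound $\E[Z] \leq \sum_i W_i Z_{(i)}$ is intuitive, it genuinely relies on $Z_n = \esssup[Z]$ being an almost-sure upper bound, and one must be careful that the partition used in the integration-by-pieces argument is valid almost surely (handling potential ties and the possibility of atoms in $F$ at the order statistics). The remaining polar-cone and permutation steps are essentially bookkeeping once the correct geometric picture — $\mu$ obtained from $W$ by subtracting an element of the polar of $\Re^n_\uparrow$ — is in place.
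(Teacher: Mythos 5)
Your proof is correct and follows essentially the same route as the paper's: the coverage-based bound $\E[Z] \leq \sum_i W_i Z_{(i)}$ obtained by partitioning $(-\infty, Z_{(n)}]$ at the order statistics, the polar-cone identity $\major{n} = (\Re^n_\uparrow)^\circ$ from \cref{prop:dual-monotone-cone} to absorb the $\major{n}$ component, and permutation invariance to pass from the sorted to the unsorted samples. The only cosmetic difference is that you phrase the argument as an almost-sure event inclusion with an explicit decomposition $W = \mu + v$, whereas the paper conditions on the event and works with a supremum over the Minkowski sum $\amb + \major{n}$; these are the same argument.
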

\begin{proof}
    We split up the expectation\footnote{For details on the integral notation see \cite[Eq.~17.22]{Billingsley1995}.} as follows:
    \begin{align}
        \E[Z] = \int_{-\infty}^{Z_{(n)}} z \di F(z) = \sum_{i=1}^{n} \int_{Z_{(i-1)}}^{Z_{(i)}} z \di F(z),
    \end{align}
    with $Z_{(1)} \leq \dots \leq Z_{(n-1)}$ the order statistics and $Z_{(n)} = \esssup[Z]$. 
    The first equality follows from $\prob[Z \leq Z_{(n)}] = 1$ and the second from \cite[Thm.~16.9]{Billingsley1995}. 
    For each term
    $\int_{Z_{(i-1)}}^{Z_(i)} z \di F(z) \leq Z_{(i)} \int_{Z_{(i-1)}}^{Z_(i)} \di F(z) = Z_{(i)} (F(Z_{(i)}) - F(Z_{(i-1)}))$. Hence
    $\E[Z] \leq \sum_{i=1}^{n} Z_{(i)} W_i$.

    Condition on $W \in \amb + \major{n}$.
    Then,
    \begin{equation*}
        \E[Z] \leq \sum_{i=1}^{n} W_i Z_{(i)} \leq \sup_{\mu \in \amb + \major{n}} \ssum_{i=1}^{n} \mu_i Z_{(i)}. 
    \end{equation*}
    The expression on the right can be simplified by noting that
    \begin{align*}
        {\sup}_{\mu \in \amb + \major{n}} {\ssum_{i=1}^{n}} \mu_i Z_{(i)} = {\sup}_{\mu \in \amb} \ssum_{i=1}^{n} \mu_i Z_{(i)},
    \end{align*}
    where we use $(Z_{(1)}, \dots, Z_{(n)}) \in \Re^n_{\uparrow}$, the definition of the polar cone and \cref{prop:dual-monotone-cone}, 
    which implies $\ssum_{i=1}^{n} s_i Z_{(i)} \leq 0$ for any $s \in \major{n}$ with equality for $s = 0$. 

    Finally, let $\pi \in \Pi^n$ be the permutation such that $Z_{\pi(i)} = Z_{(i)}$ for $i \in [n]$ and let $\pi^{-1}$ denote its inverse (which exists, since permutations are bijections). 
    Then
    \begin{align*}
        &\sup_{\mu \in \amb} \, \ssum_{i=1}^{n} \mu_i Z_{i} = {\displaystyle\sup_{\mu \in \amb}} \, \ssum_{i=1}^{n} \mu_{\pi^{-1}(i)} Z_{(i)} \\
        &\quad = \sup_{\pi \mu \in \amb} \, \ssum_{i=1}^{n} \mu_i Z_{(i)} \labelrel={step:permutation} {\displaystyle\sup_{\mu \in \amb}} \, \ssum_{i=1}^{n} \mu_i Z_{(i)},
    \end{align*} 
    where \ref{step:permutation} uses permutation invariance. Hence, we showed $\prob[\sup_{\mu \in \amb} \, \ssum_{i=1}^{n} \mu_i Z_{(i)} \geq \E[Z]] \geq \prob[W \in \amb + \major{n}]$.
\end{proof}

From \cref{prop:mean-bound-main}, it is clear that the distribution of $W$ is important. 
Interestingly, when $F$ is continuous, then $W$ is always uniformly distributed over $\Delta^n$ \cite[Thm.~8.7.4]{Wilks1964}. 
For general distributions however, we can still establish a type of \emph{stochastic order} between the two distributions using $\major{n}$:
\begin{lemma} \label{cor:coverages-stochastic-order}
    Take $W = (W_1, \dots, W_n) \in \Delta^n$ as in \cref{eq:coverages}. Then, for any (Lebesgue measurable) $\amb \subseteq \Delta^n$, 
    \begin{equation*}
        \prob[W \in \amb + \major{n}] \geq \prob[\nu \in \amb + \major{n}],
    \end{equation*}
    with $\nu \deq \mathrm{U}[\Delta^n]$ uniformly distributed over $\Delta^n$. 
    
    For continuous cdf we have $\prob[W \in \amb] = \prob[\nu \in \amb]$. 
\end{lemma}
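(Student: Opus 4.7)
My plan is to exhibit an explicit coupling of $W$ with a uniform variate $\nu \deq \mathrm{U}[\Delta^n]$ on the same probability space such that $W - \nu \in \major{n}$ almost surely. Because $\major{n}$ is a convex cone closed under addition, $\amb + \major{n} + \major{n} = \amb + \major{n}$, so the event $\{\nu \in \amb + \major{n}\}$ is contained in $\{W = \nu + (W-\nu) \in \amb + \major{n}\}$. Monotonicity of $\prob$ then delivers the claimed inequality, and the continuous-$F$ case will pop out as a degenerate special case of the same coupling.

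The coupling will come from the quantile transform. Let $U_1, \dots, U_{n-1}$ be iid uniform on $[0,1]$ and set $Z_i = F^{-1}(U_i)$ where $F^{-1}(u) = \inf\{z : F(z) \geq u\}$; this forces $Z_i \sim F$ and $F(Z_i) \geq U_i$, with equality whenever $F$ is continuous at $Z_i$. I augment with $U_n = 1$ and $Z_n = \esssup[Z]$, consistent with $F(Z_n) = 1$. Since $F^{-1}$ is nondecreasing, taking order statistics commutes with it: $Z_{(i)} = F^{-1}(U_{(i)})$. Define $\nu_i = U_{(i)} - U_{(i-1)}$ with $U_{(0)} = 0$; by the classical uniform-spacings theorem this realises $\nu \deq \mathrm{U}[\Delta^n]$.

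Next, introduce slack variables $V_i = F(Z_{(i)}) - U_{(i)} \geq 0$ for $i \in \{0, 1, \dots, n\}$, with $V_0 = 0 = V_n$ by construction. A one-line telescoping identity gives $W_i - \nu_i = V_i - V_{i-1}$, so $\sum_{i=1}^k (W_i - \nu_i) = V_k$ is nonnegative for every $k \in [n-1]$ and vanishes at $k = n$. This is exactly the defining system of $\major{n}$, hence $W - \nu \in \major{n}$ almost surely, proving the first claim. For the second claim, continuity of $F$ yields $F \circ F^{-1} = \mathrm{id}$ on $(0,1]$, so every $V_i$ vanishes, $W = \nu$ almost surely, and $\prob[W \in \amb] = \prob[\nu \in \amb]$ follows for any measurable $\amb$.

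The main obstacle will be the careful handling of the generalized inverse for a possibly discontinuous $F$: verifying that $F^{-1}(U_i) \sim F$, that order statistics commute with $F^{-1}$ so that $Z_{(i)} = F^{-1}(U_{(i)})$, and that the Galois-style inequality $F(F^{-1}(u)) \geq u$ holds pointwise with equality in the continuous regime. These facts are classical, but they are precisely what makes the slacks $V_i$ nonnegative and drives the whole argument.
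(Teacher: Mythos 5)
Your proposal is correct and follows essentially the same route as the paper: the same quantile-transform coupling $Z_{(i)} = F^{-1}(U_{(i)})$, the same key inequality $F(F^{-1}(u)) \geq u$, and the same conclusion that $W - \nu \in \major{n}$ almost surely under the coupling. Your telescoping slack variables $V_k$ and the use of $\major{n} + \major{n} = \major{n}$ are just a slightly tidier packaging of the paper's explicit unpacking of the Minkowski sum, and your continuous-cdf case recovers directly what the paper cites from Wilks.
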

\begin{proof}
    For continuous cdf we refer to \cite[Thm.~8.7.4]{Wilks1964}. For discontinuous cdf
    we first introduce a construction of the joint distribution of random vectors 
    $W'$ and $\nu'$, such that $W' \deq W$ and $\nu' \deq \nu$ (i.e., the marginals are as specified in the lemma). 
    For this construction, we show that $\nu' \in \amb + \major{n}$ implies $W' \in \amb + \major{n}$ almost surely.
    So $\prob[W' \in \amb + \major{n}] \geq \prob[\nu' \in \amb + \major{n}]$. 
    From $W' \deq W$ and $\nu' \deq \nu$ we then get the required result. 

    The construction starts by taking $U_i \deq \mathrm{U}[0, 1]$ as uniform random variables, for $i \in [n - 1]$, with 
    $U_{(1)} \leq U_{(2)} \leq \dots \leq U_{(n-1)} \leq U_{(n)} = 1$ the uniform order statistics.
    Let $\sum_{i=1}^{k} \nu_i' = U_{(k)}$ for $k \in [n]$. Then, by \cite[\S6.4]{David2003}, $\nu' \deq \mathrm{U}[\Delta^n]$ or $\nu' \deq \nu$. 
    Meanwhile, the \emph{quantile transform} \cite[Lem.~1.2.4(i)]{Reiss1989} states that $Z_i' = F^{-1}(U_i)$ has cdf $F$, where $F^{-1}$ is the quantile function of $Z$. 
    Since $F$ (and therefore $F^{-1}$) is nondecreasing, we can apply \cite[Lem.~1.2.1]{Reiss1989} to claim that $F^{-1}(U_{(i)})$ is distributed as the $i$-th order statistic $Z_{(i)}$. 
    So with $Z_{(i)}' = F^{-1}(U_{(i)})$ we take $W'$ analogously to \cref{eq:coverages}. From $Z_{(i)}' \deq Z_{(i)}$ we then have $W' \deq W$.

    Both marginals are related as, for $k \in [n]$, 
    \begin{equation} \label{eq:relationship-distributions}
        \begin{aligned}
        &\ssum_{i=1}^{k} W_i' = F(Z_{(k)}') \\
        &\quad= F(F^{-1}(U_{(k)})) = F(F^{-1}(\ssum_{i=1}^{k} \nu_i')), 
        \end{aligned}
    \end{equation}
    where the first equality follows by summing \cref{eq:coverages} for $i \in [k]$ with $W_i'$, $Z_{(i)}'$ in place of $W_i$, $Z_i$, 
    the second by construction of $Z_{(k)}'$ and the third by construction of $\nu'$.
    Note that, for general distributions, we have $F(F^{-1}(p)) \geq p$ for all $p \in [0, 1]$ \cite[Ex.~3.2]{Shorack2017}, 
    with strict inequality iff $p \in (0, 1)$ is not in the range of $F$. Applying this to \cref{eq:relationship-distributions} gives
    \begin{equation} \label{eq:stoch-dominance}
        \sum_{i=1}^{k} \nu_i' \leq \sum_{i=1}^k W_i', \, \forall k \in [n-1] \text{ and } \sum_{i=1}^n \nu_i' = \sum_{i=1}^{n} W_i'.
    \end{equation}
    Observe how \cref{eq:stoch-dominance} corresponds to a conic inequality under $\major{n}$. The inequalities are strict iff 
    $\sum_{i=1}^{k} \nu_i'$ is not in the range of $F$ (i.e., it lies in a discontinuous jump of $F$). In that sense, \cref{eq:stoch-dominance} 
    models the gap between $\mathrm{U}[\Delta^n]$ and the coverages $W$. 

    To complete the proof, assume that $\nu' \in \amb + \major{n}$. By definition of the Minkowski sum, this is equivalent to there 
    being some $\mu \in \amb$ such that $\nu - \mu \in \major{n}$ or,
    $\sum_{i=1}^{k} \mu_i \leq \sum_{i=1}^{k} \nu_i'$ for $k \in [n-1]$ and $\sum_{i=1}^{n} \mu_i = \sum_{i=1}^{n} \nu_i'$ (cf.\ \cref{eq:majorization-cone}).
    Thus, from \cref{eq:stoch-dominance}, we have $\sum_{i=1}^{k} \mu_i \leq \sum_{i=1}^{k} W_i'$ for $k \in [n-1]$ and $\sum_{i=1}^{n} \mu_i = \sum_{i=1}^{n} W_i'$. 
    By definition of $\amb + \major{n}$ and \cref{eq:majorization-cone} this shows that $W' \in \amb + \major{n}$. So, by our arguments at the start of the 
    proof, we showed the required result. 
\end{proof}


\begin{figure}
\begin{tikzpicture}
    \begin{axis}[
        cdf axis, 
        clip=false, 
        xtick={0.4, 0.55, 1.0}, 
        xticklabels={$Z_{(1)}$, $Z_{(2)}$, $Z_{(3)}$}, 
        width=8cm, 
        height=4cm
    ] 
        \addplot[thick, mGreen] table[x=x, y=y, col sep=comma] {assets/onesided/betainc.csv} node[above, mGreen] {$F$};
        \addplot[draw=none, fill=mGreen, fill opacity=0.1] table[x=x, y=y, col sep=comma] {assets/onesided/betainc.csv} -- (0, 1.0) -- (0.0, 0.0);
        \weightedcdf[mBlue!60!white, thin]{0.4/0.26, 0.55/0.65, 1.0/1.0}{3}
        \weightedcdf[mRed, semithick]{0.4/0.12, 0.55/0.5, 1.0/1.0}{3}
        \draw [ultra thin, dashed, mRed] (0.0, 0.12) -- (1.1, 0.12) node [right] {$\mu_1$};
        \draw [ultra thin, dashed, mRed] (0.0, 0.5) -- (1.1, 0.5) node [right] {$\mu_1 + \mu_2$};

        \draw [ultra thin, dashed, mBlue!60!white] (0.0, 0.26) -- (1.1, 0.26) node [right] {$W_1$};
        \draw [ultra thin, dashed, mBlue!60!white] (0.0, 0.65) -- (1.1, 0.65) node [right] {$W_1 + W_2$};
    \end{axis}
\end{tikzpicture}
    \caption{Lower bounds of the cumulative distribution for nonnegative $Z$ with cdf $F$. 
    The tightest lower bound supported on the samples is depicted in blue, while a feasible lower bound is depicted in red. 
    The green area is the expectation.\vspace{-1em}} \label{fig:cdf-bounds}
\end{figure}
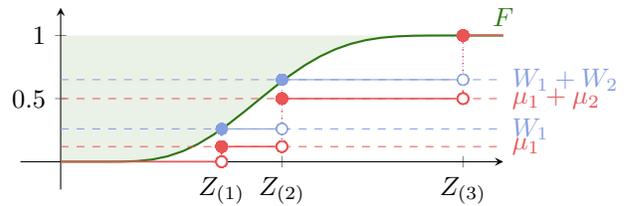

The result in \cref{cor:coverages-stochastic-order} can be interpreted in terms of lower bounding the cdf of $Z$. To illustrate this, we introduce 
the weighted empirical cdf
\begin{equation} \label{eq:weighted-cdf}
    F_\mu^n(x) = \ssum_{i=1}^{n} \mu_i \one_{[Z_{(i)}, +\infty)},
\end{equation}
where $\mu \in \Delta^n$ and $\one_{[Z_{(i)}, +\infty)}(z) = 1$ when $z \geq Z_{(i)}$ and zero otherwise. 
Note that $W \in \{\mu\} + \major{n}$ holds iff
\begin{equation*}
    F_{\mu}^n(Z_{(i)}) = \ssum_{i=1}^{k}\mu_i  \leq \ssum_{i=1}^{k} W_i = F(Z_{(i)}), \, \forall k \in [n],
\end{equation*}
with $\sum_{i=1}^{n} \mu_i = \sum_{i=1}^{n} W_i = 1$. This relationship relates to \cref{eq:stoch-dominance} and \cref{eq:majorization-cone}
and implies that $F_{\mu}^n$ should lower bound the cdf $F$ everywhere, as depicted in \cref{fig:cdf-bounds}. This inequality between cdfs is
the usual stochastic order \cite[\S1.A.1]{Shaked2007}. 
The event $W \in \amb + \major{n}$ is then equivalent to the existence of a cdf $F_\mu^n$
with weights $\mu \in \amb$ that lower bounds the true cdf. In terms of this interpretation, \cref{prop:mean-bound-main} 
follows from the fact that a lower bound on the cdf implies an upper bound on the expectation (cf. \cite{Anderson1969}, \cite[Eq.~1.A.5]{Shorack2009}).

By combining \cref{prop:mean-bound-main} and \cref{cor:coverages-stochastic-order} we directly prove
the main contribution of this paper:
\begin{theorem} \label{cor:mean-bound-main}
    Assuming the setting of \cref{prop:mean-bound-main} and taking $\nu \deq \mathrm{U}[\Delta^n]$, then
    \begin{equation*}
        \prob\left[ \sup_{\mu \in \amb} \, \sum_{i=1}^{n} \mu_i Z_{i} \geq \E[Z] \right] \geq \prob[\nu \in \amb + \major{n}]. 
    \end{equation*}
\end{theorem}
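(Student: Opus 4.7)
The plan is a one-line chaining of the two preceding results. The left-hand probability in the theorem is identical to the one on the left of \cref{prop:mean-bound-main}, while the right-hand probability is exactly the one appearing on the right of \cref{cor:coverages-stochastic-order}. The intermediate quantity $\prob[W \in \amb + \major{n}]$ will serve as the bridge between them.

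First I would invoke \cref{prop:mean-bound-main} for the given permutation invariant $\amb$ together with the random variable $Z$ (with its iid samples $Z_1, \dots, Z_{n-1}$ and essential supremum $Z_n$ assumed finite). This yields
\begin{equation*}
    \prob\left[\sup_{\mu \in \amb} \sum_{i=1}^{n} \mu_i Z_i \geq \E[Z]\right] \geq \prob[W \in \amb + \major{n}],
\end{equation*}
with $W$ the coverage vector from \cref{eq:coverages}. Next I would apply \cref{cor:coverages-stochastic-order} with the same set $\amb$, which gives
\begin{equation*}
    \prob[W \in \amb + \major{n}] \geq \prob[\nu \in \amb + \major{n}].
\end{equation*}
Transitivity of $\geq$ then produces the theorem.

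There is essentially no obstacle at the combining step, since the heavy lifting has already been done: splitting the expectation along the order statistics in \cref{prop:mean-bound-main} and coupling the coverages to a uniform law on $\Delta^n$ via the quantile transform in \cref{cor:coverages-stochastic-order}. The only minor point I would verify is that the hypotheses on $\amb$ demanded by \cref{prop:mean-bound-main}—permutation invariance together with Lebesgue measurability—also suffice for \cref{cor:coverages-stochastic-order}; this is immediate since the lemma requires only measurability, which is subsumed by the proposition's standing assumption on $\amb$.
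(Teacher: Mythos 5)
Your proposal is correct and matches the paper exactly: the theorem is stated there as a direct combination of \cref{prop:mean-bound-main} and \cref{cor:coverages-stochastic-order}, chained through the intermediate quantity $\prob[W \in \amb + \major{n}]$ precisely as you describe. Your additional check that the hypotheses on $\amb$ carry over is a sensible (if minor) point of care.
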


Note that, the important requirement of $\amb$ in \cref{cor:mean-bound-main} is that it is permutation invariant and a subset of 
the probability simplex. The resulting support functions are related to \emph{law-invariant, coherent risk measures} in literature \cite[\S6.3.5]{Shapiro2021}, \cite{Bertsimas2009b}, 
where the \emph{law} in our case is a permutation of the random vector. The $\phi$-based ambiguity sets considered below are the most frequently 
studied case of such risk measures. 

\section{Calibration Problem} \label{sec:calibration}
This section considers calibrating ambiguity sets $\amb$ such that \cref{eq:calibration} holds.
To do so, we first consider the $\phi$-divergence parametrization in \cref{eq:ambiguity-phi-div} and try to upper bound 
the smallest $\alpha$ such that \cref{eq:calibration} still holds for $\amb = \amb_{\alpha}$, which we denote as $\alpha_\star$.
Later we also provide an alternative parametrization similar to the conditional value-at-risk (cf.\ \cref{cor:simple-distortion} and the discussion below).

We can use the previous result to simplify \cref{eq:calibration}. 
Note that, for a fixed $\theta$, $\ell(\theta, \xi)$ is simply a scalar random variable, which we will denote as $Z$. So we consider
\begin{equation} \label{eq:calibration-scalar}
    \inf \left\{ \alpha \colon \prob\left[ \sup_{\mu \in \amb_{\alpha}} \sum_{i=1}^{n} \mu_i Z_{i} \geq \E[Z] \right] \geq 1 - \delta, \, \forall Z \right\},
\end{equation}
where we inherit the notation from the previous section. It's solution will upper bound $\alpha_\star$.
Note that all previous results were distribution-free. They hold for all (bounded) random variables $Z$, invariant of their underlying distribution. 
As such, by using \cref{cor:mean-bound-main}, the constraint in \cref{eq:calibration-scalar} can be conservatively approximated by 
\begin{equation}\label{eq:calibration-scalar:b}
    \prob\left[ \nu \in \amb_{\alpha} + \major{n} \right] \geq 1 - \delta, \quad \text{with } \nu \deq \mathrm{U}[\Delta^n].
\end{equation}
We use this to approximate the calibration problem:
\begin{proposition} \label{prop:divergence-bound-pre}
    Let $I_{\phi}$ denote the $\phi$-divergence, with $\amb_{\alpha}$ the associated ambiguity set as in \cref{eq:ambiguity-phi-div}. 
    Let $\alpha_\star$ denote the smallest $\alpha$ such that \cref{eq:calibration} holds for $\amb_\alpha$. Then
    \begin{align} \label{eq:quantile-function}
        \alpha_\star \leq \inf \left\{ \alpha \in \Re \colon \prob\left[ I^\diamond_{\phi}(\nu) \leq \alpha \right] \geq 1-\delta \right\},
    \end{align}
    with $\nu \deq \mathrm{U}[\Delta^n]$ and, for $\phi^*(s) = \sup_{t \geq 0} \, \{ts - \phi(t)\}$, 
    \begin{equation} \label{eq:div-adj}
        I^\diamond_{\phi}(\nu) \dfn \sup_{\lambda \in \Re^n_{\uparrow}} \left\{ \ssum_{i=1}^{n} \nu_i \lambda_i - \frac{1}{n} \phi^*(\lambda_i) \right\}.
    \end{equation}
\end{proposition}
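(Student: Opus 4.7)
The plan is to combine \cref{cor:mean-bound-main} with a convex duality computation. First, applying \cref{cor:mean-bound-main} to $Z = \ell(\theta, \xi)$ with $Z_n = \ell_n(\theta)$ shows that any $\alpha$ with $\prob[\nu \in \amb_\alpha + \major{n}] \geq 1-\delta$ also satisfies \cref{eq:calibration} for every $\theta$, so
\[\alpha_\star \leq \inf\{\alpha \in \Re \colon \prob[\nu \in \amb_\alpha + \major{n}] \geq 1-\delta\}.\]
By definition of the Minkowski sum and of $\amb_\alpha$, the event $\{\nu \in \amb_\alpha + \major{n}\}$ coincides with $\{J(\nu) \leq \alpha\}$, where
\[J(\nu) \dfn \inf\{I_\phi(\mu, \one_n/n) \colon \mu \in \Delta^n,\, \nu - \mu \in \major{n}\};\]
the normalization $\sum_i \mu_i = 1$ is automatic because $\nu \in \Delta^n$ and $\nu - \mu \in \major{n}$ force $\sum_i \mu_i = \sum_i \nu_i$. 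So it suffices to prove $J(\nu) = I^\diamond_\phi(\nu)$ almost surely.

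For the duality identity, I would parametrize $\major{n}$ via its extreme rays. Telescoping the characterization in \cref{eq:majorization-cone} gives $\major{n} = \{\ssum_{k=1}^{n-1} t_k(e_k - e_{k+1}) \colon t \geq 0\}$, where $e_k$ denotes the standard basis of $\Re^n$. The constraint $\nu - \mu \in \major{n}$ then becomes a linear equality $\mu + \ssum_k t_k(e_k - e_{k+1}) = \nu$ with $t \geq 0$. Attaching a multiplier $\lambda \in \Re^n$ to this equality yields the Lagrangian
\[L(\mu, t, \lambda) = \tfrac{1}{n}\ssum_{i=1}^n \phi(n\mu_i) + \<\lambda,\, \nu - \mu - \ssum_{k=1}^{n-1} t_k(e_k - e_{k+1})\>.\]
Minimizing over $t \geq 0$ forces $\lambda_{k+1} \geq \lambda_k$ for all $k$, confining $\lambda$ to $\Re^n_\uparrow$. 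The $\mu$-minimization separates coordinatewise and, after the change of variables $u_i = n\mu_i$, matches the definition of $\phi^*$ (which is taken over $t \geq 0$) to yield $-\phi^*(\lambda_i)/n$ per coordinate. The resulting dual value is exactly $I^\diamond_\phi(\nu)$ as in \cref{eq:div-adj}.

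The main obstacle is strong duality, which is needed to pass from $I^\diamond_\phi(\nu) \leq \alpha$ back to $J(\nu) \leq \alpha$; weak duality alone gives only $I^\diamond_\phi(\nu) \leq J(\nu)$, pointing the wrong way for the sublevel-set argument. I would verify Slater's condition: for any $\nu$ in the relative interior of $\Delta^n$, the choice $t = \epsilon \one_{n-1}$ and $\mu = \nu - \epsilon(e_1 - e_n)$ produces a strictly feasible pair for small $\epsilon > 0$, so strong duality holds there. Since $\nu \deq \mathrm{U}[\Delta^n]$ assigns full Lebesgue measure to the relative interior of $\Delta^n$, $J(\nu) = I^\diamond_\phi(\nu)$ almost surely. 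Substituting this identity into the event equivalence gives $\prob[\nu \in \amb_\alpha + \major{n}] = \prob[I^\diamond_\phi(\nu) \leq \alpha]$, and the proposition follows.
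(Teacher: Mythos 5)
Your proof is correct and follows the same overall route as the paper's: reduce to the event $\{\nu \in \amb_\alpha + \major{n}\}$ via \cref{cor:mean-bound-main}, identify that event with the sublevel set $\{J(\nu) \le \alpha\}$ of the primal value, and compute the Lagrangian dual to obtain $I^\diamond_\phi$. Your extreme-ray parametrization of $\major{n}$ is exactly the representation $\major{n} = \{\trans{M}\lambda \colon \lambda \ge 0\}$ established in the proof of \cref{prop:dual-monotone-cone}, so the dual you derive coincides with the one the paper reaches by writing the indicator of $\major{n}$ as the support function of $\Re^n_\uparrow$. The one place where you genuinely add something is the duality step, and you are right to worry about it: the paper justifies $\inf_\mu \sup_\lambda \le \sup_\lambda \inf_\mu$ by ``weak duality,'' but the minimax inequality runs the other way, so weak duality alone only yields $I^\diamond_\phi(\nu) \le J(\nu)$ --- the wrong direction for the inclusion $\{I^\diamond_\phi(\nu) \le \alpha\} \subseteq \{J(\nu) \le \alpha\}$ on which \cref{eq:quantile-function} rests. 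Your Slater argument on the relative interior of $\Delta^n$ (or, alternatively, the observation that all constraints are polyhedral and $\mu = \nu$ is feasible with finite cost, so no constraint qualification beyond feasibility is needed) supplies the missing equality $J(\nu) = I^\diamond_\phi(\nu)$ almost surely, which is what the bound actually requires. The only step you and the paper both leave implicit is that equating $\{\nu \in \amb_\alpha + \major{n}\}$ with $\{J(\nu) \le \alpha\}$ uses attainment of the primal infimum; this is immediate from compactness of the feasible set and lower semicontinuity of $I_\phi(\cdot, \one_n/n)$, so it is not a real gap.
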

\begin{proof}
    Note that $\nu \in \amb_{\alpha} + \major{n}$ holds iff there is some $\mu \in \amb_{\alpha}$ such that 
    $\nu - \mu \in \major{n}$. Equivalently, there should exist a $\mu \in \Delta^n$, which satisfies
    $I_{\phi}(\mu, \one_n/n) \leq \alpha$ and $\nu - \mu \in \major{n}$. We already showed that 
    the smallest $\alpha$ satisfying \eqref{eq:calibration-scalar:b} upper bounds $\alpha_\star$. The left-hand side of \eqref{eq:calibration-scalar:b} equals
    \begin{equation*}
        \prob\left[ 
            \inf_{\mu \in \Delta^n} \, \left\{\ssum_{i=1}^{n} \frac{1}{n} \phi(n \mu_i) \colon \nu - \mu \in \major{n} \right\} \leq \alpha \right].
    \end{equation*}
    Since $\nu \deq \mathrm{U}[\Delta^n]$, the constraint $\nu - \mu \in \major{n}$, together with $\sum_{i=1}^{n} \nu_i = 1$ 
    implies (cf.~\cref{eq:majorization-cone}) $\sum_{i=1}^{n} \mu_i = 1$. So the infimum can be taken over $\Re^n_+$. We place the constraint inside of the cost by noting that 
    the indicator function of a polar cone equals the support function of its dual \cite[Ex.~2.26]{Beck2017}. So we can rewrite \cref{eq:calibration-scalar:b} as follows:
    \begin{align*}
        &\inf_{\mu \in \Re^n_+} \, \sup_{\lambda \in \Re^n_{\uparrow}} \, \left\{\ssum_{i=1}^{n} \frac{1}{n} \phi(n \mu_i) + \<\lambda, \nu - \mu\> \right\}, \\
        & \leq  \sup_{\lambda \in \Re^n_{\uparrow}} \, \ssum_{i=1}^{n} \inf_{t_i \geq 0} \left\{ \phi(t_i) - t_i \lambda_i \right\} / n + \lambda_i \nu_i,
    \end{align*}
    The inequality follows by weak duality, the substitution $t_i = n\mu_i$ and separability.  
    The infima inside the sum are $-\phi^*(\lambda_i)$, completing the proof by the argument preceding \cref{eq:calibration-scalar:b}.
\end{proof}

Observe that the right-hand side of \cref{eq:quantile-function} is essentially the $1-\delta$ quantile of the scalar random variable $I^\diamond_{\phi}(\nu)$. 
Unfortunately its distribution is unknown. However, we can sample from it. To do so note that $\nu \deq \mathrm{U}[\Delta^n]$ is Dirichlet distributed with parameters $(1, \dots, 1) \in \Re^n$ \cite[\S6.4]{David2003}. 
So we can sample it by either sampling from a Dirichlet distribution, or by sampling $n-1$ uniform order statistics $U_{(1)} \leq U_{(2)} \leq \dots \leq U_{(n-1)}$
and using $(U_{(1)}, U_{(2)} - U_{(1)}, \dots, 1 - U_{(n-1)})$ as a sample from $\mathrm{U}[\Delta^n]$ (cf. \cite[\S6.4]{David2003}). 
We can then evaluate $I^\diamond_{\phi}(\nu)$ by solving \cref{eq:div-adj}, which is a special case of the 
optimization problem in \cite{Best2000}. That paper presents the \emph{pool-adjacent violator (PAV)} algorithm.
It solves \cref{eq:div-adj} with complexity $\mathcal{O}(n)$. 

The fact that we can sample $I^\diamond_{\phi}(\nu)$ efficiently implies that \cref{eq:calibration-scalar} can be estimated
through data-driven means.
\begin{theorem} \label{thm:divergence-bound}
    Let $\alpha_1, \dots, \alpha_m$ denote $m$ iid samples from $I^\diamond_{\phi}(\nu)$ with $\nu \deq \mathrm{U}[\Delta^n]$ and 
    $\alpha_{(1)} \leq \dots \leq \alpha_{(m)}$ the associated order statistics. Then, for any $\delta \in [0, 1]$ 
    and $k \in [m]$, 
    \begin{equation} \label{eq:confidence-upper-bound}
        \prob\left[ \alpha_\star \leq \alpha_{(k)} \right] \geq 1 - \beta
    \end{equation}
    with $\alpha_\star$ as in \cref{prop:divergence-bound-pre}, and $\beta = I_{1-\delta}(k, m-k+1)$ 
    the regularized incomplete beta function (i.e., the cdf of a beta distribution) at level $1-\delta$.
\end{theorem}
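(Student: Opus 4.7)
The plan is to reduce the claim to the classical nonparametric confidence bound for a quantile via order statistics. Let $q_{1-\delta}$ denote the quantity on the right-hand side of \cref{eq:quantile-function}, i.e.\ the $(1-\delta)$-quantile of the scalar random variable $I^\diamond_{\phi}(\nu)$. By \cref{prop:divergence-bound-pre}, $\alpha_\star \leq q_{1-\delta}$ deterministically, so it suffices to show
\[
\prob[\alpha_{(k)} \geq q_{1-\delta}] \geq 1 - \beta.
\]

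The next step is to translate the order statistic event into a binomial tail. By definition of the $k$-th order statistic, $\alpha_{(k)} \geq q_{1-\delta}$ if and only if at most $k-1$ of the iid samples $\alpha_1, \dots, \alpha_m$ are strictly smaller than $q_{1-\delta}$. Setting $p \dfn \prob[I^\diamond_{\phi}(\nu) < q_{1-\delta}]$, the count $N = |\{i \in [m] \colon \alpha_i < q_{1-\delta}\}|$ is $\mathrm{Bin}(m, p)$-distributed, whence $\prob[\alpha_{(k)} \geq q_{1-\delta}] = \prob[N \leq k-1]$. Since $q_{1-\delta}$ is the infimum of $\{\alpha \colon \prob[I^\diamond_{\phi}(\nu) \leq \alpha] \geq 1-\delta\}$, any $\alpha < q_{1-\delta}$ satisfies $\prob[I^\diamond_{\phi}(\nu) \leq \alpha] < 1 - \delta$, and taking $\alpha \uparrow q_{1-\delta}$ through monotone continuity of probability yields $p \leq 1 - \delta$. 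Stochastic monotonicity of $\mathrm{Bin}(m, \cdot)$ in its success probability then gives $\prob[N \leq k-1] \geq \prob[\mathrm{Bin}(m, 1-\delta) \leq k-1]$.

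To close the proof I would invoke the standard identity $\prob[\mathrm{Bin}(m, q) \leq k-1] = 1 - I_q(k, m-k+1)$ applied at $q = 1-\delta$, which is exactly the beta--binomial duality underlying the fact that the $k$-th uniform order statistic is $\mathrm{Beta}(k, m-k+1)$. Chaining the inequalities,
\[
\prob[\alpha_\star \leq \alpha_{(k)}] \geq \prob[\alpha_{(k)} \geq q_{1-\delta}] \geq 1 - I_{1-\delta}(k, m-k+1) = 1 - \beta.
\]
The only step that requires any care, and hence the main obstacle, is justifying $p \leq 1-\delta$ without assuming continuity of the cdf of $I^\diamond_{\phi}(\nu)$; the infimum characterization of $q_{1-\delta}$ together with the limiting argument above resolves it. Everything else is a routine reduction to a binomial tail.
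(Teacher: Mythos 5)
Your argument is correct and follows essentially the same route as the paper: the paper simply reduces \cref{eq:confidence-upper-bound} to the classical one-sided nonparametric confidence bound for a quantile (citing \cite[\S{}G.2.2]{Meeker2017}, or equivalently the scenario-program bound of \cite[Thm.~3.7]{Campi2018}), and your proposal is a self-contained derivation of exactly that cited bound via the binomial-tail/incomplete-beta identity. Your care with the case $p = \prob[I^\diamond_{\phi}(\nu) < q_{1-\delta}]$ for a possibly discontinuous cdf is the right detail to worry about and is handled correctly.
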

\begin{proof}
    The result follows directly from \cref{prop:divergence-bound-pre} and a one-sided data-driven bound of a quantile 
    stated in \cite[\S{}G.2.2]{Meeker2017} applied to \cref{eq:quantile-function}.
    Alternatively \cref{eq:quantile-function} can be interpreted as a \emph{scenario program}, 
    for which \cite[Thm.~3.7]{Campi2018} holds.
\end{proof}

In practice, the user would select $m$ (the number of samples from $I^\diamond_{\phi}(\nu)$ computed using PAV).
A larger value gives a tighter upper bound for $\alpha_\star$ at the cost of additional computation time. The confidence level is then determined 
by fixing some $\beta \in [0, 1]$ and then finding the smallest $k$ such that $I_{1-\delta}(k, m-k+1) \leq \beta$. 
Such a $k$ is determined with a scalar root finder\footnote{We use \texttt{brentq} as implemented in \texttt{scipy 1.10.0}\label{foot:brentq}}.

We can establish a connection with other results in literature, through the following corollary of \cref{cor:mean-bound-main}:
\begin{corollary} \label{cor:simple-distortion}
    Let $\mu \in \Re^n_{\uparrow} \cap \Delta^n$. Then, for uniform order statistics $U_{(1)} \leq U_{(2)} \leq \dots \leq U_{(n-1)} \leq U_{(n)} = 1$
    and $Z_{(1)} \leq Z_{(2)} \leq \dots Z_{(n-1)}$ and $Z_{(n)} = \esssup[Z]$ the order statistics of rv $Z \colon \Omega \to \Re$,
    \begin{equation} \label{eq:distortion-confidence}
        \prob\left[ \sum_{i=1}^{n} \mu_i Z_{(i)} \geq \E[Z] \right] \geq \prob\left[ \sum_{i=1}^{k} \mu_i \leq U_{(k)}, \, \forall k \in [n] \right].
    \end{equation}
\end{corollary}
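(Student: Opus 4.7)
The plan is to instantiate \cref{cor:mean-bound-main} with the orbit $\amb = \Pi^n \mu$. Since $\mu \in \Delta^n$, this orbit sits inside $\Delta^n$, and it is permutation invariant by construction (applying any $\pi \in \Pi^n$ to an element of $\Pi^n \mu$ merely composes permutations and lands back in $\Pi^n \mu$). So \cref{cor:mean-bound-main} applies and yields
\begin{equation*}
    \prob\Bigl[ \sup_{\mu' \in \Pi^n \mu} \ssum_{i=1}^n \mu'_i Z_i \geq \E[Z] \Bigr] \geq \prob[\nu \in \Pi^n \mu + \major{n}],
\end{equation*}
with $\nu \deq \mathrm{U}[\Delta^n]$. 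I would then identify each side with the corresponding quantity in \cref{eq:distortion-confidence}.

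First I would simplify the supremum on the left. Writing $(\pi \mu)_i = \mu_{\pi(i)}$ and reindexing via $j = \pi(i)$ gives $\sum_i (\pi\mu)_i Z_i = \sum_j \mu_j Z_{\pi^{-1}(j)}$. Because $\mu$ is nondecreasing, the rearrangement inequality implies that this sum is maximized exactly when $(Z_{\pi^{-1}(j)})_{j=1}^n$ is also nondecreasing, i.e.\ when $\pi^{-1}$ sorts $Z$. Thus the supremum equals $\sum_j \mu_j Z_{(j)}$, matching the left-hand side of \cref{eq:distortion-confidence}.

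For the right-hand side I would drop to the identity permutation: since $\mu \in \Pi^n \mu$, the event $\nu - \mu \in \major{n}$ implies $\nu \in \Pi^n \mu + \major{n}$, so $\prob[\nu \in \Pi^n \mu + \major{n}] \geq \prob[\nu - \mu \in \major{n}]$. To rewrite this last probability in terms of uniform order statistics, I would use the spacings construction: let $U_{(1)} \leq \dots \leq U_{(n-1)} \leq U_{(n)} = 1$ be the order statistics of $n-1$ iid uniforms and define $\tilde\nu = (U_{(1)}, U_{(2)} - U_{(1)}, \dots, 1 - U_{(n-1)})$. By the standard fact cited earlier in the paper, $\tilde\nu \deq \mathrm{U}[\Delta^n]$, and its partial sums are precisely $\sum_{i=1}^k \tilde\nu_i = U_{(k)}$. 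Unpacking \cref{eq:majorization-cone}, $\tilde\nu - \mu \in \major{n}$ is then equivalent to $\sum_{i=1}^k \mu_i \leq U_{(k)}$ for all $k \in [n-1]$, with equality automatic at $k=n$ since both vectors lie in $\Delta^n$, so it is equivalent to the stated event $\{\sum_{i=1}^k \mu_i \leq U_{(k)},\ \forall k \in [n]\}$.

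The only step that needs any care is the identification of the Dirichlet law of $\nu$ with the spacings construction; this is the bridge between the abstract cone condition $\nu - \mu \in \major{n}$ and the clean quantile description via $U_{(k)}$. Beyond that, the whole proof is a direct specialization of \cref{cor:mean-bound-main} together with a single application of the rearrangement inequality and the trivial inclusion $\mu \in \Pi^n \mu$.
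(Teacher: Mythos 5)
Your proposal is correct and follows essentially the same route as the paper: instantiate \cref{cor:mean-bound-main} with the ambiguity set generated by the permutations of $\mu$, identify the resulting supremum with the distortion risk $\sum_i \mu_i Z_{(i)}$, lower-bound $\prob[\nu \in \amb + \major{n}]$ by restricting to the single element $\mu \in \amb$, and translate $\nu - \mu \in \major{n}$ into the partial-sum condition via the spacings representation of $\mathrm{U}[\Delta^n]$. The only cosmetic difference is that you take $\amb = \Pi^n\mu$ and justify the supremum identity directly with the rearrangement inequality, whereas the paper takes the convex hull $\hull{\Pi^n\mu}$ and cites \cite{Bertsimas2009b}; these are equivalent since a linear functional attains the same supremum over a set and its convex hull.
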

\begin{proof}
    Using \cite{Bertsimas2009b} gives $\sum_{i=1}^{n} \mu_i Z_{(i)} = \sup_{\mu \in \amb} \, \sum_{i=1}^{n} \mu_i Z_{i}$,
    with $\amb = \hull{\Pi^n \mu}$ the convex hull of all permutations of $\mu$.
    Invoking \cref{cor:mean-bound-main} then gives $\prob\left[ \sum_{i=1}^{n} \mu_i Z_{(i)} \geq \E[Z] \right] \geq \prob[\nu \in \amb + \major{n}]$,
    with $\nu$ distributed according to $\mathrm{U}[\Delta^n]$. 
    Assume that  $\sum_{i=1}^{k} \mu_i \leq U_{(k)}$ for all $k \in [n-1]$.
    Noting that $\sum_{i=1}^{k} \nu_i \deq U_{(k)}$ by \cite[\S6.4]{David2003}
    and using \cref{eq:majorization-cone}, this implies that 
    $\nu - \mu \in \major{n}$ and, since $\mu \in \amb$, $\nu \in \amb + \major{n}$. 
    So $\prob[\nu \in \amb + \major{n}] \geq \prob[\sum_{i=1}^{k} \mu_i \leq U_{(k)}, \, \forall k \in [n]]$.
\end{proof}
 
Evaluating the right-hand side of \cref{eq:distortion-confidence} was studied in the context of 
lower bounding the cdf of a random variable (cf.\ \cite{Moscovich2020} for an efficient and numerically stable algorithm). 

The expression $\sum_{i=1}^{n} \mu_i Z_{(i)}$ is called a \emph{distortion risk} \cite{Bertsimas2009b},
which is a convex function of $(Z_1, \dots, Z_n)$ iff $\mu \in \Re^n_{\uparrow}$. Since the uniform order statistics form 
a uniform distribution over a convex set (i.e., $\Re^n_{\uparrow} \cap [0, 1]^n$), their density is quasi-concave \cite[Ex.~4.10]{Shapiro2021}. 
Hence we can leverage \cite[Ex.~4.17, Cor.~4.42]{Shapiro2021} to claim that the calibration problem
\begin{equation*}
    \inf_{\mu \in \Re^n_{\uparrow} \cap \Delta^n} \, \left\{\varphi(\mu) \colon \prob\left[\sum_{i=1}^k \mu_i \leq U_{(k)}, \, \forall k \in [n]\right] \geq 1 - \delta \right\}
\end{equation*}
is a convex problem, whenever $\varphi$ is convex. We will study the choice of $\varphi$, 
which should measure the size of $\amb$, in future work as well as the tractability of the calibration problem. 
For now, we instead consider a specific parametrization:
\begin{equation} \label{eq:cvar-weights}
    \mu^{(\gamma)}\dfn \left(0, \dots, 0, \tfrac{\lceil (n-1)\gamma \rceil}{n-1} - \gamma, \tfrac{1}{n-1}, \dots, \tfrac{1}{n-1}, \gamma\right),
\end{equation}
with $\gamma \geq 1/(n-1)$, such that $\mu \in \Delta^n \cap \Re_\uparrow^n$ holds. Then 
\begin{equation} \label{eq:biased-cvar}
    \sum_{i=1}^{n} \mu_i^{(\gamma)} Z_{(i)} =  \left( \tfrac{d}{n-1} - \gamma \right) Z_{(d)} + \sum_{i=d+1}^{n-1} \tfrac{Z_{(i)}}{n-1} + \gamma Z_{(n)},
\end{equation}
with $d = \lceil (n-1)\gamma\rceil$. The final expression is a well-known bound for the expectation of $Z$ due to Anderson \cite{Anderson1969}.
The re-interpretation in terms of a distortion risk is novel to the authors' knowledge. In \ilpub{the technical report \cite[App.~A-B]{TR}}\ilarxiv{\cref{app:cvar}}
we also show that the final expression is an affine transformation of the well-known conditional value-at-risk. Hence we denote it as $\bar{\CVAR}$ 
in the experiments. The value of $\gamma$ in the context of \cite{Anderson1969} is usually determined using an 
asymptotic bound (cf.\ \cite[Thm. 11.6.2]{Wilks1964}). We find an accurate value by solving:
\begin{equation} \label{eq:calibration-cvar}
    \inf_{\gamma \geq 1/(n-1)} \, \left\{\gamma \colon \prob\left[ \sum_{i=1}^{k} \mu_i^{(\gamma)} \leq U_{(k)}, \, \forall k \in [n] \right] \geq 1- \delta\right\},
\end{equation}
for a user-specified $\delta \in [0, 1]$ with a scalar root finder\footref{foot:brentq}.
The probability is evaluated numerically using \cite{Moscovich2020}. The calibration problem \cref{eq:calibration-cvar} in \cite{Anderson1969}
is interpreted as moving the empirical cdf down as little as possible, while still guaranteeing that it lower bounds the true cdf with high 
probability. It is also comparable to producing the tightest mean bound as in the calibration problem \cref{eq:calibration-scalar}.

\section{Case Studies} \label{sec:case-studies} 

To illustrate the validity and potential of our method we provide several simple case studies. These 
are convex for maximum interpretability, as in the non-convex case a worse generalization performance might be caused by local optima. 
Nonetheless our method is also applicable in non-convex settings, where stochastic gradient
descent methods can be used (cf. \cite{Mehta2022} for simple distortions and \cite{Chouzenoux2019} for divergences).
In the convex case we use duality to reformulate the proxy cost in \cref{eq:ordered-risk}. See \cite{Schuurmans2023,Bertsimas2009b} for details.

We present problems of the form \cref{eq:erm} and employ ordered risk minimization \cref{eq:ordered-risk} 
or using \cref{eq:biased-cvar}, which we refer to as $\bar{\CVAR}$. For divergences we use either the \emph{total variation (TV)} or
\emph{Kullback-Leibler (KL)} and the radius is calibrated using \cref{thm:divergence-bound} (with $\beta = 0.005$ and $m=10\,000$).
The value of $\gamma$ in \cref{eq:biased-cvar} is calibrated using \cref{eq:calibration-cvar}.


\subsection{Newsvendor}
We begin with a toy problem, illustrating the behavior of our method in low-sample settings. 
Let $\xi \colon \Omega \to \Re$ be Beta distributed with $\alpha = 0.1$, $\beta = 0.2$, scaled by a factor $\bar{D} \dfn 100$. 
Consider a newsvendor problem \cite[\S1.2.1]{Shapiro2021}:
\begin{equation*} \label{eq:newsvendor}
    \begin{alignedat}{2}
        &\minimize_{\theta \in \Re} & \quad & \E\left[\smashunderbracket{c\theta + b[\xi - \theta]_+ +  h[\theta - \xi]_+}{\ell(\theta, \xi)}\right],
    \end{alignedat}
\end{equation*}
with $b = 14$, $h = 2$ amd $c = 1$. For samples $\{\xi_i\}_{i=1}^{n-1}$ with $n=20$ let $\ell_i(\theta) = \ell(\theta, \xi_i)$ 
for $i \in [n-1]$, $\ell_n(\theta) = \max\{(c - b)\theta + b \bar{D}, (c + h)\theta\}$ a robust upper bound. 
We replace the expectation by a data-driven proxy as described at the start of the section.
For the \emph{sample average approach (SAA)} we take $\sum_{i=1}^{n-1} \ell_i(\theta)/(n-1)$.

\begin{figure}
    \centering
    \vspace{0.2em}
    \input{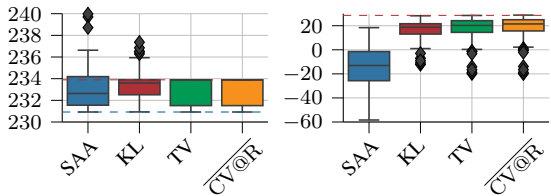}\vspace{-0.5em}
    \caption{Box plots showing newsvendor expected cost (left); and difference between the predicted cost and expected cost (right). The colored area is the \emph{inter-quartile range (IQR)}, while the whiskers show the 
    range of samples truncated to $1.5$ times the IQR. Outliers outside of this range are depicted as diamonds. The red dashed lines depict the robust performance. The blue dashed line is the optimal cost.} \label{fig:newsvendor-example}%
    \vspace{-1em}
\end{figure}

The calibration problems are solved for $\delta = 0.2$. 
Their performance is compared over $200$ sampled data sets in \cref{fig:newsvendor-example}.
The left plot shows the actual expected cost for the minimizers.
The blue dashed line is the true optimum of \cref{eq:newsvendor}. See \cite[\S1.2.1]{Shapiro2021} for details on how to compute these values. 
Note how the SAA performs decently in the median, but has significantly more variance. The outliers above $240$ were omitted, the largest of which was $428.2$. 
Moreover, the right plot depicts the difference between the optimum value of the proxy cost, and the true cost. 
The SAA often underestimates its true cost, while our methods overestimate it. The dashed red line depicts the behavior when taking $\amb = \Delta^n$
in \cref{eq:ordered-risk} (cf. \cite[Eq.~1.9]{Shapiro2021}). As we almost never perform worse than this robust method, 
this shows that our methods learn from data without over-fitting on the sample. 

In large sample cases, we can use the largest sample as an approximation of $\ell_n(\theta)$. 
This heuristic is similar to the one used in the scenario approach, the consequences of which have been studied in detail (cf.\ \cite{Ramponi2018}).
In combination with some regularization, this significantly boosts 
the performance of our method, as shown in the next \ilarxiv{examples}\ilpub{example}.

\begin{arxiv}
\subsection{Regression}
Let $T_k \colon \Re \to \Re$ denote the Chebychev polynomials of the first kind for $k \geq 0$ and $f_d(x) = (T_k(x))_{k=0}^{d} \in \Re^{d+1}$ 
a feature vector. Consider a lasso regression problem:
\begin{equation} \label{eq:regression}
    \begin{alignedat}{2}
        &\minimize_{\theta \in \Re^{d+1}} &\qquad & \E\left[(\<f_d(X), \theta\> - Y)^2\right] + \lambda \nrm{\theta}_1.
    \end{alignedat}    
\end{equation}
Assuming access to samples $\{(X_i, Y_i)\}_{i=1}^n$, we replace the expectation with the proxy costs described above,
where $\ell_i(\theta) = (\<f_d(X_i), \theta\> - Y_i)^2$ for $i \in [n]$. 
So we approximate the robust term with the largest sample.

For the parameters $\theta_\star = (0, 0, 0.2, 0.5, 1.0)$ the data is generated as $Y_i = \<f_4(X_i), \theta_\star\> + E_i$ with $X_i \deq \mathcal{U}(-1, 1)$ 
and $E_i \deq \mathcal{U}(-0.2, 0.2)$ for $i \in [n]$. We over-parametrize the problem, taking $d = 20$, to illustrate the regularizing effect of our method.
A fit is plotted for $\lambda = 0.2$ and $n = 50$ in \cref{fig:regression-example}. Note how the risk measures all perform similarly, 
while SAA has a worse fit.

\begin{figure}
    \centering
    \input{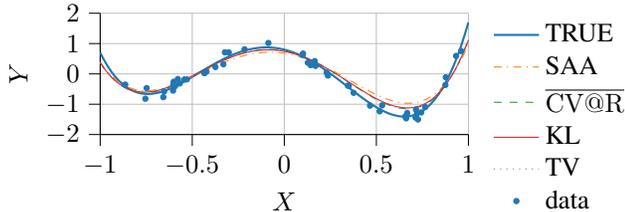}
    \caption{Regression using $n = 50$ samples with $d=20$ and $\lambda = 0.2$ for different risk measures.} \label{fig:regression-example}
    \vspace{-1em}
\end{figure}

The methods are evaluated quantitatively by sampling an additional $100\,000$ data points and computing a sample approximation of the cost of \cref{eq:regression}.
The resulting performance is compared for several tunings in \cref{tab:regression-quantitative}, where any parameters not mentioned are kept as specified above. 
It is of note that our methods are significantly less sensitive to tuning parameters compared to the SAA. In fact, our methods outperform SAA for all tunings investigated.

{\setlength{\tabcolsep}{4pt}\tiny
\begin{table}
    \vspace{1.2em}
   \begin{tabular}{lllllll}
    \toprule
    &  &  & SAA & TV & $\bar{\CVAR}$ & KL \\
    $\varepsilon$ & $d$ & $\lambda$ &  &  &  &  \\
    \midrule
    \multirow[t]{10}{*}{$0.05$} & \multirow[t]{5}{*}{$10$} & $0.001$ & $0.019\,(03)$ & $0.018\,(02)$ & $0.018\,(02)$ & $0.018\,(02)$ \\
    &  & $0.01$ & $0.018\,(05)$ & $0.017\,(04)$ & $0.017\,(04)$ & $0.017\,(04)$ \\
    &  & $0.05$ & $0.023\,(05)$ & $0.017\,(05)$ & $0.017\,(05)$ & $0.018\,(05)$ \\
    & \multirow[t]{5}{*}{$20$} & $0.001$ & $0.023\,(03)$ & $0.023\,(04)$ & $0.023\,(04)$ & $0.023\,(03)$ \\
    &  & $0.01$ & $0.019\,(05)$ & $0.019\,(04)$ & $0.019\,(04)$ & $0.019\,(04)$ \\
    &  & $0.05$ & $0.024\,(05)$ & $0.018\,(05)$ & $0.018\,(05)$ & $0.018\,(05)$ \\
    \midrule
    \multirow[t]{10}{*}{$0.2$} & \multirow[t]{5}{*}{$10$} & $0.001$ & $0.019\,(03)$ & $0.019\,(02)$ & $0.019\,(02)$ & $0.018\,(02)$ \\
    &  & $0.01$ & $0.018\,(05)$ & $0.017\,(04)$ & $0.017\,(04)$ & $0.017\,(04)$ \\
    &  & $0.05$ & $0.023\,(05)$ & $0.018\,(05)$ & $0.018\,(05)$ & $0.018\,(05)$ \\
    & \multirow[t]{5}{*}{$20$} & $0.001$ & $0.023\,(03)$ & $0.023\,(03)$ & $0.023\,(03)$ & $0.023\,(03)$ \\
    &  & $0.01$ & $0.019\,(05)$ & $0.019\,(04)$ & $0.019\,(04)$ & $0.019\,(04)$ \\
    &  & $0.05$ & $0.024\,(05)$ & $0.018\,(04)$ & $0.018\,(04)$ & $0.018\,(04)$ \\
    \bottomrule
   \end{tabular}
   \centering
   \caption{Regression generalization performance for various tuning parameters. Values are reported as \emph{mean (standard deviation $\cdot\, 10^{3}$)}
   computed over $10$ training sets. The same $10$ sets were used for every selection of parameters and method. Note that $\varepsilon$ 
   does not affect SAA.} \label{tab:regression-quantitative} \vspace{-1em}
\end{table}
}
\end{arxiv}

\subsection{Support Vector Machines}
Consider a classification problem with $X \deq \mathcal{N}(0, I_2)$ normally distributed and $Y = 1$ if $X_{1} X_{2} \geq 0$ 
and $Y = -1$ otherwise
. A \emph{Support Vector Machine (SVM)}
solves:
\begin{equation*}
    \begin{alignedat}{2}
        &\minimize_{(f, b) \in \set{H} \times \Re} &\qquad & \frac{1}{2} \nrm{f}_{\set{H}}^2 + \lambda\E\left[ 1 - Y (f(X) - b) \right]_+
    \end{alignedat}
\end{equation*}
with $\lambda > 0$ and $\set{H}$ some \emph{reproducing kernel Hilbert Space (RKHS)} \cite[Def.~2.9]{Scholkopf2002}. The 
resulting classifier is then given by $\mathrm{sign}(f(X) - b)$. Henceforth $\set{H}$ is the RKHS associated with the \emph{radial basis function} kernel 
\cite[\S2.3]{Scholkopf2002} with some standard deviation $\sigma$.
Solving the primal problem is difficult for two reasons: \emph{(i)} 
the true expectation is often unknown; \emph{(ii)} optimizing over the infinite dimensional $\set{H}$ is intractable in general. 
We resolve \emph{(i)} by replacing the expectation with a proxy-cost as described above and \emph{(ii)} through the usual duality trick \cite[\S7.4]{Scholkopf2002}.
Details are deferred to \ilarxiv{\cref{app:svm}}\ilpub{\cite[App.~B]{TR}}. 

The proxy cost of three of the risks above -- SAA, TV and $\bar{\CVAR}$ -- is a maximum of linear functions and the dual problem is a QP. 
The sample average -- C-SVC in \cite[\S7.5]{Scholkopf2002} -- is the usual choice. We illustrate the superior performance our calibrated risks. 

\begin{figure}
    \vspace{1em}
    \centering
    \def\svgwidth{0.85\columnwidth}
    \input{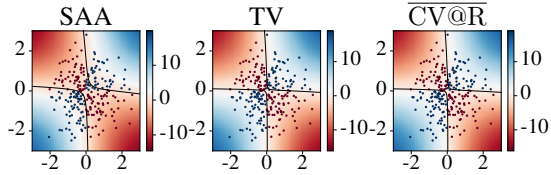}
    \caption{SVM classifiers trained using $n = 250$ samples with $\sigma=0.25$ and $\lambda = 10^4$ for different risks. The red and blue markers are samples for $Y=1$ and $-1$ respectively. 
    The line is the decision boundary and the color axis depicts $f(X) - b$.} \label{fig:svm-example}
    \vspace{-0.5em}
\end{figure}

In \cref{fig:svm-example}, the three classifiers produced by the three proxy costs above are depicted. 
Note how both TV and $\bar{\CVAR}$ perform similarly and both visibly better than the usual SAA. 
Quantitative performance is compared through the fraction of 
incorrectly labeled samples in a test set of $10^5$ samples, which 
we refer to as the misclassification rate. The performance is compared for several tunings in \cref{tab:svm-quantitative}, 
where any parameters not mentioned are kept as specified above. It is of note that our methods are significantly less 
sensitive to tuning parameters compared to the SAA. In fact, even for the tunings where SAA performs best, our methods 
perform better for the same tuning, for reasonable choices of $\delta$. 

{\setlength{\tabcolsep}{4pt}
\begin{table}
    \vspace{1.2em}
    \centering
\begin{tabular}{lllccc}
    \toprule
    $\delta$ & $\sigma$ & $\lambda$ & SAA & TV & $\bar{\CVAR}$ \\
   \midrule
   \multirow[t]{20}{*}{$0.05$} 
   & \multirow[t]{4}{*}{$0.05$} & $10^{4}$ & 0.474\,(0.052) & 0.126\,(0.100) & 0.111\,(0.054) \\
   &  & $10^{6}$ & 0.170\,(0.057) & 0.052\,(0.035) & 0.052\,(0.041) \\
   &  & $10^{8}$ & 0.053\,(0.020) & 0.020\,(0.008) & 0.022\,(0.011) \\
   &  & $10^{10}$ & 0.039\,(0.029) & 0.027\,(0.013) & 0.022\,(0.017) \\
   & \multirow[t]{4}{*}{$0.25$} & $10^{4}$ & 0.097\,(0.047) & 0.045\,(0.038) & 0.038\,(0.035) \\
   &  & $10^{6}$ & 0.042\,(0.012) & 0.019\,(0.004) & 0.016\,(0.003) \\
   &  & $10^{8}$ & \bftab 0.022\,(0.010) & 0.019\,(0.004) & 0.018\,(0.003) \\
   &  & $10^{10}$ & 0.067\,(0.082) & 0.061\,(0.114) & 0.043\,(0.045) \\
   & \multirow[t]{4}{*}{$0.5$} & $10^{4}$ & 0.055\,(0.021) & 0.023\,(0.007) & 0.023\,(0.006) \\
   &  & $10^{6}$ & 0.030\,(0.008) & 0.020\,(0.003) & 0.020\,(0.004) \\
   &  & $10^{8}$ & 0.023\,(0.008) & 0.024\,(0.007) & 0.024\,(0.007) \\
   &  & $10^{10}$ & 0.202\,(0.140) & 0.145\,(0.110) & 0.114\,(0.110) \\
   \midrule
   \multirow[t]{20}{*}{$0.1$} 
   & \multirow[t]{4}{*}{$0.05$} & $10^{4}$ & 0.474\,(0.052) & 0.154\,(0.093) & 0.145\,(0.075) \\
   &  & $10^{6}$ & 0.170\,(0.057) & 0.059\,(0.048) & 0.060\,(0.046) \\
   &  & $10^{8}$ & 0.053\,(0.020) & 0.022\,(0.011) & 0.021\,(0.012) \\
   &  & $10^{10}$ & 0.039\,(0.029) & 0.030\,(0.021) & 0.030\,(0.020) \\
   & \multirow[t]{4}{*}{$0.25$} & $10^{4}$ & 0.097\,(0.047) & 0.046\,(0.042) & 0.041\,(0.036) \\
   &  & $10^{6}$ & 0.042\,(0.012) & \bftab 0.015\,(0.003) & \bftab 0.015\,(0.003) \\
   &  & $10^{8}$ & \bftab 0.022\,(0.010) & 0.020\,(0.005) & 0.018\,(0.004) \\
   &  & $10^{10}$ & 0.067\,(0.082) & 0.076\,(0.119) & 0.037\,(0.039) \\
   & \multirow[t]{4}{*}{$0.5$} & $10^{4}$ & 0.055\,(0.021) & 0.025\,(0.009) & 0.024\,(0.009) \\
   &  & $10^{6}$ & 0.030\,(0.008) & 0.021\,(0.004) & 0.021\,(0.005) \\
   &  & $10^{8}$ & 0.023\,(0.008) & 0.024\,(0.007) & 0.024\,(0.007) \\
   &  & $10^{10}$ & 0.202\,(0.140) & 0.145\,(0.097) & 0.112\,(0.110) \\
   \midrule
   \multirow[t]{20}{*}{$0.2$} 
   & \multirow[t]{4}{*}{$0.05$} & $10^{4}$ & 0.474\,(0.052) & 0.225\,(0.142) & 0.197\,(0.100) \\
   &  & $10^{6}$ & 0.170\,(0.057) & 0.074\,(0.050) & 0.075\,(0.051) \\
   &  & $10^{8}$ & 0.053\,(0.020) & 0.025\,(0.015) & 0.026\,(0.015) \\
   &  & $10^{10}$ & 0.039\,(0.029) & 0.033\,(0.023) & 0.036\,(0.026) \\
   & \multirow[t]{4}{*}{$0.25$} & $10^{4}$ & 0.097\,(0.047) & 0.049\,(0.044) & 0.045\,(0.040) \\
   &  & $10^{6}$ & 0.042\,(0.012) & 0.019\,(0.005) & 0.016\,(0.003) \\
   &  & $10^{8}$ & \bftab 0.022\,(0.010) & 0.017\,(0.004) & 0.017\,(0.004) \\
   &  & $10^{10}$ & 0.067\,(0.082) & 0.070\,(0.113) & 0.039\,(0.033) \\
   & \multirow[t]{4}{*}{$0.5$} & $10^{4}$ & 0.055\,(0.021) & 0.027\,(0.014) & 0.027\,(0.015) \\
   &  & $10^{6}$ & 0.030\,(0.008) & 0.021\,(0.005) & 0.020\,(0.005) \\
   &  & $10^{8}$ & 0.023\,(0.008) & 0.024\,(0.007) & 0.024\,(0.007) \\
   &  & $10^{10}$ & 0.202\,(0.140) & 0.186\,(0.144) & 0.112\,(0.101) \\
   \bottomrule
\end{tabular}
   
    \caption{%
    SVM misclassification rates for various tuning parameters. Reported values are the \emph{mean (standard deviation)}
    over $10$ training sets. The same $10$ sets were used for every parameter selection and method. Note that $\delta$ 
    does not affect SAA. The lowest values in a column are bold. Observe that in the rows where SAA achieves its best performance, our methods still perform better.%
    } \label{tab:svm-quantitative} \vspace{-1.5em}
\end{table}
}

\begin{figure}
    \centering
    \begin{tikzpicture}
    \definecolor{darkorange_}{RGB}{255,127,14}
    \definecolor{forestgreen_}{RGB}{44,160,44}
    \definecolor{steelblue_}{RGB}{31,119,180}
    
    \begin{axis}[
        numeric axis,
        xmode=log,
        xmin=10, xmax=1000,
        ymin=0, ymax=0.5,
        legend style={at={(axis cs: 1000, 0.6)}, anchor=north east, draw=none, yshift=-6pt, xshift=-6pt},
        legend cell align={left},
        xlabel={$n$},
        ylabel={misclass.\ rate},
        width=\columnwidth,
        height=0.375\columnwidth,
        darkorange/.style={darkorange_, dash dot, thick},
        forestgreen/.style={forestgreen_, dashed, thick},
        steelblue/.style={steelblue_, thick},
    ]
    \addlegendimage{steelblue}
    \addlegendentry{\footnotesize{}SAA};
    \addlegendimage{darkorange}
    \addlegendentry{\footnotesize{}TV};
    \addlegendimage{forestgreen}
    \addlegendentry{\footnotesize{}$\bar{\CVAR}$};

    \addplot [draw=steelblue_, fill=steelblue_, mark=-, only marks]
    table{%
    x  y
    10 0.18885
    10 0.39347
    };
    \addplot [draw=steelblue_, fill=steelblue_, mark=-, only marks]
    table{%
    x  y
    15 0.19874
    15 0.31073
    };
    \addplot [draw=steelblue_, fill=steelblue_, mark=-, only marks]
    table{%
    x  y
    23 0.16067
    23 0.24473
    };
    \addplot [draw=steelblue_, fill=steelblue_, mark=-, only marks]
    table{%
    x  y
    35 0.12376
    35 0.19851
    };
    \addplot [draw=steelblue_, fill=steelblue_, mark=-, only marks]
    table{%
    x  y
    53 0.11064
    53 0.18576
    };
    \addplot [draw=steelblue_, fill=steelblue_, mark=-, only marks]
    table{%
    x  y
    81 0.09463
    81 0.17309
    };
    \addplot [draw=steelblue_, fill=steelblue_, mark=-, only marks]
    table{%
    x  y
    123 0.07884
    123 0.15004
    };
    \addplot [draw=steelblue_, fill=steelblue_, mark=-, only marks]
    table{%
    x  y
    187 0.07128
    187 0.13723
    };
    \addplot [draw=steelblue_, fill=steelblue_, mark=-, only marks]
    table{%
    x  y
    284 0.05175
    284 0.11733
    };
    \addplot [draw=steelblue_, fill=steelblue_, mark=-, only marks]
    table{%
    x  y
    432 0.05331
    432 0.10888
    };
    \addplot [draw=steelblue_, fill=steelblue_, mark=-, only marks]
    table{%
    x  y
    657 0.04509
    657 0.09446
    };
    \addplot [draw=steelblue_, fill=steelblue_, mark=-, only marks]
    table{%
    x  y
    1000 0.04217
    1000 0.0867
    };
    \addplot [draw=darkorange_, fill=darkorange_, mark=-, only marks]
    table{%
    x  y
    10 0.16592
    10 0.39681
    };
    \addplot [draw=darkorange_, fill=darkorange_, mark=-, only marks]
    table{%
    x  y
    15 0.13697
    15 0.28749
    };
    \addplot [draw=darkorange_, fill=darkorange_, mark=-, only marks]
    table{%
    x  y
    23 0.10255
    23 0.17783
    };
    \addplot [draw=darkorange_, fill=darkorange_, mark=-, only marks]
    table{%
    x  y
    35 0.07374
    35 0.13799
    };
    \addplot [draw=darkorange_, fill=darkorange_, mark=-, only marks]
    table{%
    x  y
    53 0.04261
    53 0.10511
    };
    \addplot [draw=darkorange_, fill=darkorange_, mark=-, only marks]
    table{%
    x  y
    81 0.03782
    81 0.09301
    };
    \addplot [draw=darkorange_, fill=darkorange_, mark=-, only marks]
    table{%
    x  y
    123 0.02718
    123 0.06968
    };
    \addplot [draw=darkorange_, fill=darkorange_, mark=-, only marks]
    table{%
    x  y
    187 0.02711
    187 0.0468
    };
    \addplot [draw=darkorange_, fill=darkorange_, mark=-, only marks]
    table{%
    x  y
    284 0.0188
    284 0.04456
    };
    \addplot [draw=darkorange_, fill=darkorange_, mark=-, only marks]
    table{%
    x  y
    432 0.01779
    432 0.04326
    };
    \addplot [draw=darkorange_, fill=darkorange_, mark=-, only marks]
    table{%
    x  y
    657 0.00834000000000001
    657 0.02489
    };
    \addplot [draw=darkorange_, fill=darkorange_, mark=-, only marks]
    table{%
    x  y
    1000 0.00788999999999995
    1000 0.02727
    };
    \addplot [draw=forestgreen_, fill=forestgreen_, mark=-, only marks]
    table{%
    x  y
    10 0.16397
    10 0.39681
    };
    \addplot [draw=forestgreen_, fill=forestgreen_, mark=-, only marks]
    table{%
    x  y
    15 0.13622
    15 0.28749
    };
    \addplot [draw=forestgreen_, fill=forestgreen_, mark=-, only marks]
    table{%
    x  y
    23 0.11745
    23 0.18719
    };
    \addplot [draw=forestgreen_, fill=forestgreen_, mark=-, only marks]
    table{%
    x  y
    35 0.07229
    35 0.1427
    };
    \addplot [draw=forestgreen_, fill=forestgreen_, mark=-, only marks]
    table{%
    x  y
    53 0.04505
    53 0.10902
    };
    \addplot [draw=forestgreen_, fill=forestgreen_, mark=-, only marks]
    table{%
    x  y
    81 0.03645
    81 0.09358
    };
    \addplot [draw=forestgreen_, fill=forestgreen_, mark=-, only marks]
    table{%
    x  y
    123 0.02934
    123 0.07226
    };
    \addplot [draw=forestgreen_, fill=forestgreen_, mark=-, only marks]
    table{%
    x  y
    187 0.02444
    187 0.05278
    };
    \addplot [draw=forestgreen_, fill=forestgreen_, mark=-, only marks]
    table{%
    x  y
    284 0.01676
    284 0.0448
    };
    \addplot [draw=forestgreen_, fill=forestgreen_, mark=-, only marks]
    table{%
    x  y
    432 0.01941
    432 0.04194
    };
    \addplot [draw=forestgreen_, fill=forestgreen_, mark=-, only marks]
    table{%
    x  y
    657 0.00814000000000004
    657 0.02967
    };
    \addplot [draw=forestgreen_, fill=forestgreen_, mark=-, only marks]
    table{%
    x  y
    1000 0.00602999999999998
    1000 0.03102
    };
    \addplot [steelblue]
    table {%
    10 0.18885
    10 0.39347
    };
    \addplot [steelblue]
    table {%
    15 0.19874
    15 0.31073
    };
    \addplot [steelblue]
    table {%
    23 0.16067
    23 0.24473
    };
    \addplot [steelblue]
    table {%
    35 0.12376
    35 0.19851
    };
    \addplot [steelblue]
    table {%
    53 0.11064
    53 0.18576
    };
    \addplot [steelblue]
    table {%
    81 0.09463
    81 0.17309
    };
    \addplot [steelblue]
    table {%
    123 0.07884
    123 0.15004
    };
    \addplot [steelblue]
    table {%
    187 0.07128
    187 0.13723
    };
    \addplot [steelblue]
    table {%
    284 0.05175
    284 0.11733
    };
    \addplot [steelblue]
    table {%
    432 0.05331
    432 0.10888
    };
    \addplot [steelblue]
    table {%
    657 0.04509
    657 0.09446
    };
    \addplot [steelblue]
    table {%
    1000 0.04217
    1000 0.0867
    };
    \addplot [steelblue]
    table {%
    10 0.289249666666667
    15 0.245928666666667
    23 0.193936333333333
    35 0.166404
    53 0.143886
    81 0.131428333333333
    123 0.113370666666667
    187 0.0979253333333333
    284 0.081105
    432 0.077855
    657 0.0665683333333333
    1000 0.0620413333333333
    };
    \addplot [darkorange]
    table {%
    10 0.16592
    10 0.39681
    };
    \addplot [darkorange]
    table {%
    15 0.13697
    15 0.28749
    };
    \addplot [darkorange]
    table {%
    23 0.10255
    23 0.17783
    };
    \addplot [darkorange]
    table {%
    35 0.07374
    35 0.13799
    };
    \addplot [darkorange]
    table {%
    53 0.04261
    53 0.10511
    };
    \addplot [darkorange]
    table {%
    81 0.03782
    81 0.09301
    };
    \addplot [darkorange]
    table {%
    123 0.02718
    123 0.06968
    };
    \addplot [darkorange]
    table {%
    187 0.02711
    187 0.0468
    };
    \addplot [darkorange]
    table {%
    284 0.0188
    284 0.04456
    };
    \addplot [darkorange]
    table {%
    432 0.01779
    432 0.04326
    };
    \addplot [darkorange]
    table {%
    657 0.00834000000000001
    657 0.02489
    };
    \addplot [darkorange]
    table {%
    1000 0.00788999999999995
    1000 0.02727
    };
    \addplot [darkorange]
    table {%
    10 0.273421
    15 0.202971666666667
    23 0.145454333333333
    35 0.10497
    53 0.0682196666666667
    81 0.0654166666666667
    123 0.049121
    187 0.0404373333333333
    284 0.0333993333333333
    432 0.0311653333333333
    657 0.0191353333333333
    1000 0.0194006666666667
    };
    \addplot [forestgreen]
    table {%
    10 0.16397
    10 0.39681
    };
    \addplot [forestgreen]
    table {%
    15 0.13622
    15 0.28749
    };
    \addplot [forestgreen]
    table {%
    23 0.11745
    23 0.18719
    };
    \addplot [forestgreen]
    table {%
    35 0.07229
    35 0.1427
    };
    \addplot [forestgreen]
    table {%
    53 0.04505
    53 0.10902
    };
    \addplot [forestgreen]
    table {%
    81 0.03645
    81 0.09358
    };
    \addplot [forestgreen]
    table {%
    123 0.02934
    123 0.07226
    };
    \addplot [forestgreen]
    table {%
    187 0.02444
    187 0.05278
    };
    \addplot [forestgreen]
    table {%
    284 0.01676
    284 0.0448
    };
    \addplot [forestgreen]
    table {%
    432 0.01941
    432 0.04194
    };
    \addplot [forestgreen]
    table {%
    657 0.00814000000000004
    657 0.02967
    };
    \addplot [forestgreen]
    table {%
    1000 0.00602999999999998
    1000 0.03102
    };
    \addplot [forestgreen]
    table {%
    10 0.274501666666667
    15 0.202802333333333
    23 0.149837666666667
    35 0.105479333333333
    53 0.0759403333333333
    81 0.0632976666666667
    123 0.0510246666666667
    187 0.0385383333333333
    284 0.0336413333333333
    432 0.030228
    657 0.0202433333333333
    1000 0.0183866666666667
    };
    \end{axis}
    
    \end{tikzpicture}
    \vspace{-0.5em}
    \caption{SVM misclassification rates for varying sample counts $n$. 
    The center line depicts the mean, while the intervals depicts the empirical $0.2$-confidence interval.} \label{fig:svm-complexity} \vspace{-1.5em}
\end{figure}
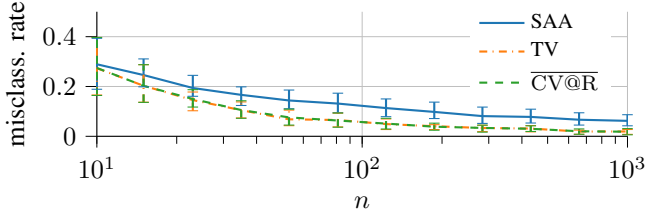

We can also examine the effect of varying the sample count $n$. For each such value we train the classifiers, again using the parameters used to produce \cref{fig:svm-example}, 
for $30$ training sets. The resulting misclassification rates are depicted in \cref{fig:svm-complexity}. Again note that $\bar{\CVAR}$ and TV both outperform SAA.

\printbibliography{}

\appendices
\section{Preliminaries}
\subsection{Monotone Cone}
Let $\Re^n_{\uparrow} \dfn \{x \in \Re^n \colon x_1 \leq x_2 \leq \dots \leq x_n\}$ 
denote the monotone cone. This cone and its polar have a history in isotonic regression \cite{Barlow1972}
and majorization \cite{Steerneman1990}. 

We show that $\major{n}$ and $\Re_\uparrow^n$ are related.
\begin{lemma} \label{prop:dual-monotone-cone}
    Let $\Re_\uparrow^n$ be the monotone cone and $\major{n}$ as in \cref{eq:majorization-cone}.
    Then $\major{n}$ is the polar of $\Re_\uparrow^n$.
\end{lemma}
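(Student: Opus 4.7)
The plan is to exhibit an explicit conic-linear representation of $\Re_\uparrow^n$ as a sum of a line and a finitely generated cone, and then read off the polar from the standard calculus of polar cones.

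First I would parametrize $\Re_\uparrow^n$ via the nonnegative successive differences. Given $x \in \Re_\uparrow^n$, let $t = x_1 \in \Re$ and $s_k = x_{k+1} - x_k \geq 0$ for $k \in [n-1]$. Setting $u_k \dfn (\underbrace{0, \dots, 0}_{k}, \underbrace{1, \dots, 1}_{n-k}) \in \Re^n$, telescoping gives
\begin{equation*}
    x = t \one_n + \sum_{k=1}^{n-1} s_k u_k.
\end{equation*}
Conversely every such combination lies in $\Re_\uparrow^n$. Hence $\Re_\uparrow^n = \mathrm{span}(\one_n) + \mathrm{cone}(u_1, \dots, u_{n-1})$.

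Next I would compute the polar using two standard facts: the polar of a Minkowski sum of cones is the intersection of their polars, and the polar of the finitely generated cone $\mathrm{cone}(u_1, \dots, u_{n-1})$ is $\{y : \langle u_k, y\rangle \leq 0, \forall k \in [n-1]\}$, while the polar of the line $\mathrm{span}(\one_n)$ is $\{y : \langle \one_n, y\rangle = 0\} = \{y : \sum_{i=1}^n y_i = 0\}$. Combining these, $y \in (\Re_\uparrow^n)^\circ$ iff $\sum_{i=1}^n y_i = 0$ and $\sum_{i=k+1}^n y_i \leq 0$ for every $k \in [n-1]$.

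Finally I would rewrite the inequalities using the equality constraint: since $\sum_{i=1}^n y_i = 0$, we have $\sum_{i=k+1}^n y_i = -\sum_{i=1}^k y_i$, so $\sum_{i=k+1}^n y_i \leq 0$ is equivalent to $\sum_{i=1}^k y_i \geq 0$. This yields exactly the defining constraints of $\major{n}$ in \cref{eq:majorization-cone}, completing the proof. There is no real obstacle here; the only thing to be careful about is keeping the signs straight in the polar duality between ``generator $u_k$ in the primal'' and ``halfspace $\langle u_k, y\rangle \leq 0$ in the polar,'' and verifying that the span contribution forces the equality $\sum_i y_i = 0$ rather than just an inequality.
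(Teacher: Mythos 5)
Your proof is correct, but it takes the dual route to the paper's. The paper starts from the facet description $\Re_\uparrow^n = \{x \colon Mx \leq 0\}$ with $Mx = (x_1 - x_2, \dots, x_{n-1} - x_n)$ and invokes Farkas' lemma to identify the polar with $\{\trans{M}\lambda \colon \lambda \geq 0\}$, then unwinds $y = \trans{M}\lambda$ into the partial-sum constraints via $\lambda_k = \sum_{j=1}^k y_j$. You instead start from the generator description $\Re_\uparrow^n = \mathrm{span}(\one_n) + \mathrm{cone}(u_1, \dots, u_{n-1})$ obtained by telescoping the successive differences, and apply the elementary calculus rules $(K_1 + K_2)^\circ = K_1^\circ \cap K_2^\circ$ and $(\mathrm{cone}(u_1,\dots,u_{n-1}))^\circ = \{y \colon \langle u_k, y\rangle \leq 0,\ \forall k\}$. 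Your direction of the polarity is the cheap one---it follows directly from the definition of the polar cone by testing against each generator and each of $\pm\one_n$---whereas the paper's direction (polar of an intersection of halfspaces equals the conic hull of the normals) genuinely requires Farkas or LP duality. Both arguments converge to the same constraints after the identical final step of trading $\sum_{i=k+1}^n y_i \leq 0$ for $\sum_{i=1}^k y_i \geq 0$ using $\sum_{i=1}^n y_i = 0$. Your version is arguably the more self-contained; the paper's has the side benefit of exhibiting the explicit multipliers $\lambda_k$, which makes the link between the polar cone and the cumulative sums immediate.
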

\begin{proof}
    The monotone cone is polyhedral with $\Re_\uparrow^n = \{x \colon Mx \leq 0\}$ for 
    $M \in \Re^{n-1 \times n}$ with $Mx = (x_1 - x_2, x_2 - x_3, \dots, x_{n-1} - x_n)$. 
    The definition of the polar cone is thus 
    \begin{align*}
        (\Re_\uparrow^n)^\circ &= \left\{ y \in \Re^n \colon \<x, y\> \leq 0, \forall x \text{ s.t. } M x \leq 0 \right\}.
    \end{align*}
    By Farkas' lemma \cite[p.~263]{Boyd2004} we have either $Mx \leq 0$ and $\<x, y\> > 0$ or $\trans{M} \lambda = y$ and $\lambda \geq 0$. 
    So
    \begin{align*}
        (\Re_\uparrow^n)^\circ = \left\{ y \in \Re^n \colon y = \trans{M} \lambda, \lambda \geq 0 \right\}.
    \end{align*}
    Note that $\<\lambda, Mx\> = \sum_{i=1}^{n-1} \lambda_i (x_i - x_{i+1}) = \sum_{j=1}^{n} x_j (\lambda_{j} - \lambda_{j-1}) = \<\trans{M} \lambda, x\>$,
    where $\lambda_0 = \lambda_{n} = 0$. Thus $y \in (\Re_\uparrow^n)^\circ$ iff $y = \trans{M} \lambda$ for $\lambda \geq 0$. Here $y = \trans{M} \lambda$ holds iff 
    \begin{align*}
        y_j &= \lambda_j - \lambda_{j-1}, &&\forall j \in [n].  \\
        \Leftrightarrow \quad \ssum_{j=1}^k y_j &= \ssum_{j=1}^k \lambda_j - \lambda_{j-1} = \lambda_k, &&\forall k \in [n]. 
    \end{align*}
    Since $\lambda \geq 0$ we have $\sum_{j=1}^k y_j = \lambda_k \geq 0$ for $k \in [n-1]$ and $\sum_{j=1}^{n} y_j = \lambda_n = 0$. 
    These are the constraints in \cref{eq:majorization-cone}. 
\end{proof}


\begin{arxiv}
\subsection{Details on $\bar{\CVAR}$} \label{app:cvar}
We first characterize the conditional value-at-risk in terms of order statistics as a \emph{distortion risk} below.
\begin{lemma} \label{lem:cvar-rewritten}
    Consider $\CVAR_{n}^{\gamma} \colon \Re^{n} \to \Re$ as
    \begin{equation} \label{eq:cvar-definition}
        \CVAR_{n}^{\gamma}[X] = \inf_{\tau} \left\{ \tau + \frac{1}{(1-\gamma) n} \sum_{i=1}^{n} [X_i - \tau]_+ \right\},
    \end{equation}        
    for $\gamma \in [0, 1]$. Then
    \begin{equation*}
        (1 - \gamma)\CVAR_n^{\gamma}[X] = \left( \frac{d}{n} - \gamma \right) X_{(d)} + \sum_{i=d+1}^{n} \frac{X_{(i)}}{n},
    \end{equation*}
    with $d \dfn \lceil n\gamma \rceil$. So $\CVAR_n^\gamma$ is a distortion risk. 
\end{lemma}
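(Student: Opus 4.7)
The plan is to recognize \cref{eq:cvar-definition} as the Rockafellar--Uryasev representation of conditional value-at-risk and to solve the inner minimization over $\tau$ explicitly, so that the distortion form drops out after substitution.

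First I would note that the map $\tau \mapsto g(\tau) \dfn \tau + \tfrac{1}{(1-\gamma) n} \sum_{i=1}^{n} [X_i - \tau]_+$ is convex and piecewise affine with breakpoints at the order statistics $X_{(1)} \leq \dots \leq X_{(n)}$. On the interval $(X_{(k)}, X_{(k+1)})$ there are exactly $n-k$ indices $i$ with $X_i > \tau$, hence the derivative of $g$ is $1 - (n-k)/((1-\gamma) n)$. This derivative is nonpositive iff $k \leq \gamma n$ and nonnegative iff $k \geq \gamma n$. Consequently any minimizer is achieved at $\tau = X_{(d)}$ with $d = \lceil n \gamma \rceil$; when $n \gamma \in \mathbb{N}$ there is a flat minimum over $[X_{(d)}, X_{(d+1)}]$, so the choice $\tau = X_{(d)}$ remains optimal.

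Next I would substitute $\tau = X_{(d)}$ back into $g$. Since $[X_i - X_{(d)}]_+$ vanishes for the $d$ smallest order statistics and equals $X_{(i)} - X_{(d)}$ for $i \in \{d+1, \dots, n\}$, one obtains
\begin{equation*}
    \CVAR_n^{\gamma}[X] = X_{(d)} + \frac{1}{(1-\gamma) n} \ssum_{i=d+1}^{n} \bigl(X_{(i)} - X_{(d)}\bigr).
\end{equation*}
Multiplying through by $1 - \gamma$ and collecting the $X_{(d)}$-terms yields
\begin{equation*}
    (1-\gamma)\,\CVAR_n^{\gamma}[X] = \Bigl(1 - \gamma - \tfrac{n-d}{n}\Bigr) X_{(d)} + \frac{1}{n} \ssum_{i=d+1}^{n} X_{(i)} = \Bigl(\tfrac{d}{n} - \gamma\Bigr) X_{(d)} + \ssum_{i=d+1}^{n} \tfrac{X_{(i)}}{n},
\end{equation*}
which is exactly the distortion representation claimed. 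The distortion weights $\mu_i$ one reads off from this expression are nondecreasing in $i$ (zero for $i < d$, then $d/n - \gamma \in [0, 1/n]$, then $1/n$ for $i > d$), confirming $\mu \in \Re^n_{\uparrow} \cap \Delta^n$.

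The main obstacle is purely bookkeeping at the boundary: making sure the subdifferential argument selects $\tau = X_{(d)}$ in all cases, including when $n\gamma$ is an integer, and checking the coefficient identity $1 - \gamma - (n-d)/n = d/n - \gamma$. The rest is direct algebraic substitution, so no deeper ingredient is needed.
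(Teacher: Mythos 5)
Your proof is correct and follows essentially the same route as the paper: identify the minimizer of the Rockafellar--Uryasev objective as $\tau = X_{(d)}$, substitute, and simplify. The only (immaterial) difference is that you locate the minimizer by an elementary piecewise-affine derivative argument, whereas the paper cites Rockafellar and Uryasev's characterization of the argmin interval via the empirical value-at-risk.
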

\begin{proof}
    Consider the minimizers in the definition of $\CVAR$:
    \begin{equation*}
        \argmin_\tau \left\{ \tau + \frac{1}{(1-\gamma) n} \sum_{i=1}^{n} [X_i - \tau]_+ \right\}.
    \end{equation*} 
    By \cite[Thm.~1]{Rockafellar2000}, this set is a closed bounded interval with the left endpoint being
    \begin{align*}
        \VAR_{n}^{\gamma}[X] &\dfn \inf_x \{x \colon F_n(x) \geq \gamma\} \\
                            &= \inf_x \left\{ x \colon \sum_{i=1}^{n }\bm{1}_{(-\infty, x]}(X_i) \geq \gamma n \right\} = X_{(d)},
    \end{align*}
    with $F_n$ the empirical cdf, the definition of which we plugged in for the second equality.
    For the third equality note that the left-hand side counts the number of values $X_i$ smaller than or equal to $x$. 
    Assume 
    \begin{equation}
        X_{(d-k-1)} < X_{(d-k)} = X_{(d-k+1)} = \dots = X_{(d)},
    \end{equation}
    for $k \geq 0$. Then clearly there are at least $d = \lceil n \gamma \rceil > n\gamma$ values smaller than or equal to $X_{(d-k)}$. 
    For any $z < X_{(d-k)}$ 
    there are at most $d-k-1$ samples values than or equal. Hence 
    $\VAR_{n}^{\gamma}[X] = X_{(d-k)} = X_{(d)}$. 

    Plugging into the cost of \cref{eq:cvar-definition} gives 
    \begin{align}
        &X_{(d)} + \frac{1}{(1-\gamma) n} \sum_{i=1}^{n} [X_i - X_{(d)}]_+ \nonumber \\
        &\quad = X_{(d)} + \frac{1}{(1-\gamma) n} \sum_{i=d + 1}^{n} (X_{(i)} - X_{(d)}),\label{eq:cvar-rewritten}
    \end{align}
    where we used $X_{(i)} \leq X_{(d)}$ for $i \leq d$. The stated result follows from some 
    basic algebraic manipulation. Finally note that $d/n - \gamma = (\lceil n\gamma \rceil - n\gamma)/n \leq 1/n$. 
    So the monotonicity constraint on $\mu$ in \cref{cor:simple-distortion} is also satisfied.
\end{proof}

We can then rewrite \cref{eq:biased-cvar} as 
\begin{equation*}
    \bar{\CVAR}_n^\gamma[X] \dfn (1 - \gamma) \CVAR_{n-1}^{\gamma}[(X_{(i)})_{i=1}^{n-1}] + \gamma X_{(n)}.
\end{equation*}
The associated distortion risk has weights $\mu^{(\gamma)}$ as in \cref{eq:cvar-weights}.
So we need $\gamma \geq 1/(n-1)$ for $\mu \in \Delta^n \cap \Re_\uparrow^n$ to hold. The advantage of $\bar{\CVAR}$ 
as a well-calibrated risk measure over $\CVAR$ is that additional weight is placed on the largest sample. 
This often makes the mean bound associated with $\bar{\CVAR}$ less conservative compared to $\CVAR$ 
for the same confidence level. 
\end{arxiv}
\begin{arxiv}

\section{Support Vector Machines} \label{app:svm}
\newcommand{\threshold}{b}
Let $\set{H}$ be some \emph{reproducing kernel Hilbert Space (RKHS)} \cite[Def.~2.9]{Scholkopf2002} with reproducing kernel $\kappa \colon \Re^d \times \Re^d \to \Re$ for some $d \in \N$. 
Here $\<f, g\>_{\set{H}}$ denotes the inner product associated with $\set{H}$ and $\nrm{f}_{\set{H}}^2 = \<f, f\>$ for $f, g \in \set{H}$. 
The \emph{primal problem} for learning a support vector machine is usually given in terms of the \emph{hinge loss}:
\begin{equation*}
    \begin{alignedat}{2}
        &\minimize_{(f, \threshold) \in \set{H} \times \Re} &\quad & \frac{1}{2} \nrm{f}_{\set{H}}^2 + \lambda \E\left[ 1 - Y (f(X) - \threshold) \right]_+.
    \end{alignedat}
\end{equation*}
with $\lambda > 0$, $X \colon \Omega \to \Re^d$ and $Y \colon \Omega \to \{-1, 1\}$. Using the reproducing property of $\kappa$ (cf. \cite[Def.~2.9.1]{Scholkopf2002}) we have $f(X) = \<\kappa(X, \cdot), f\>$. 
Given a sample $\{(X_i, Y_i)\}_{i=1}^{n}$, let $\hat{Y} = \mathrm{diag}(Y_1, \dots, Y_n) \in \Re^{n \times n}$ and $\hat{K} \colon \set{H} \to \Re^n$ 
a linear operator such that $(\hat{K} f)_{i} = \<\kappa(X_i, \cdot), f\>$. We do not include a data-point modeling the $\esssup$ 
of the random loss. Instead the largest sample will act as a replacement. 

We then replace the expectation with a proxy cost, as in \cref{eq:ordered-risk}. 
The robustified, data-driven problem then becomes
\begin{equation} \label{eq:primal-problem}
    \begin{alignedat}{2}
        &\minimize_{(f, \threshold) \in \set{H} \times \Re} &\quad & \frac{1}{2} \nrm{f}_{\set{H}}^2 + \lambda \rho\left[ \one_n - \hat{Y} \left(\hat{K} f - \threshold \one_n \right) \right]_+,
    \end{alignedat}
\end{equation}
where $\rho(x) = \sup_{\mu \in \amb} \, \<\mu, x\>$ is a support function.

\begin{proposition} \label{prop:svm-reformulation}
    The value of \cref{eq:primal-problem} equals
    \begin{equation} \label{eq:svm-dual-problem}
        \begin{alignedat}{2}
            &\maximize_{(\alpha, \beta) \in \Re^n_{+} \times \Re^n_+} &\qquad & \sum_{i=1}^{n} \alpha_i - \frac{1}{2} \sum_{i, j=1}^{n} \alpha_i \alpha_j Y_i Y_j \kappa(X_i, X_j) \\
            &\stt && \sum_{i=1}^{n} \alpha_i Y_i = 0, \, \frac{\alpha + \beta}{\lambda} \in \amb.
        \end{alignedat}
    \end{equation}
    Moreover, let $\alpha^\star, \beta^\star$ denote the optimizers and $\set{J} \dfn \{j \in [n] \colon \alpha_j > 0, \, \beta_j > 0\}$.
    Then
    \begin{align*}
        f^\star &= \sum_{i=1}^{n} \alpha_i^\star Y_i \kappa(X_i, \cdot) \\
        b^\star &= \sum_{i=1}^{n} \alpha_i^\star Y_i \kappa(X_i, X_j) - Y_j, \quad \forall j \in \set{J} \, \text{when } \set{J} \neq \emptyset
    \end{align*}
    are the optimizers\footnote{In practice, we pick $b^\star$ as the average of the values over all $j \in \set{J}$.} of \cref{eq:primal-problem} when $\set{J} \neq \emptyset$.
    When $\set{J} = \emptyset$, then $b^\star$ can be determined by solving \cref{eq:primal-problem}, keeping $f = f^\star$ fixed.
\end{proposition}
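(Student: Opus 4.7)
The plan is to recast \cref{eq:primal-problem} in epigraph form and then compute its Lagrangian dual. First I would introduce slack variables $\xi \in \Re^n$ to absorb both the positive-part and the support-function inside $\rho$, yielding the equivalent problem
\begin{equation*}
    \minimize_{f \in \set{H},\,b \in \Re,\,\xi \in \Re^n} \, \tfrac{1}{2}\nrm{f}_{\set{H}}^2 + \lambda \rho(\xi) \quad \stt \; \xi_i \geq 1 - Y_i(\<\kappa(X_i,\cdot),f\>_{\set{H}} - b),\; \xi_i \geq 0.
\end{equation*}
I would then attach multipliers $\alpha_i \geq 0$ to the hinge inequality and $\beta_i \geq 0$ to $\xi_i \geq 0$, forming the Lagrangian and grouping by variable.

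Next I would minimize out the primal variables one at a time. The term linear in $b$ is $b \sum_i \alpha_i Y_i$, which enforces $\sum_i \alpha_i Y_i = 0$ on the dual domain. The term quadratic in $f$ is $\tfrac{1}{2}\nrm{f}_{\set{H}}^2 - \<f, \sum_i \alpha_i Y_i \kappa(X_i,\cdot)\>_{\set{H}}$; its minimizer is the representer-theorem expression $f^\star = \sum_i \alpha_i Y_i \kappa(X_i,\cdot)$, and the corresponding optimal value contributes the quadratic form $-\tfrac{1}{2}\sum_{i,j} \alpha_i \alpha_j Y_i Y_j \kappa(X_i,X_j)$. The remaining $\xi$-terms give $\inf_\xi \, \lambda \rho(\xi) - \<\alpha+\beta,\xi\>$.

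The main obstacle is the last infimum: I would identify $\lambda \rho = \sigma_{\lambda \amb}$ as the support function of $\lambda \amb$ and then invoke the standard duality $\sigma_C^* = \iota_{\overline{\mathrm{conv}} \, C}$ \cite[Ex.~2.31]{Beck2017}. Assuming $\amb$ is convex and closed (as is the case for the ambiguity sets used in the paper), this infimum equals $0$ when $(\alpha+\beta)/\lambda \in \amb$ and $-\infty$ otherwise, producing exactly the feasibility constraint in \cref{eq:svm-dual-problem}. Strong duality then follows because the primal is convex with only affine inequality constraints that admit a strictly feasible point (take $\xi$ large), so Slater's condition holds and \cite[Thm.~5.3.2]{Beck2017} applies.

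Finally I would extract the recovery formulas from the KKT conditions. The stationarity in $f$ already yields the representer form of $f^\star$. For $b^\star$, complementary slackness on any index $j \in \set{J}$ gives $\alpha_j^\star>0 \Rightarrow \xi_j^\star = 1 - Y_j(f^\star(X_j) - b^\star)$ and $\beta_j^\star>0 \Rightarrow \xi_j^\star = 0$; combining these and using $Y_j \in \{-1,+1\}$ (so $1/Y_j = Y_j$) yields $b^\star = f^\star(X_j) - Y_j = \sum_i \alpha_i^\star Y_i \kappa(X_i,X_j) - Y_j$ as claimed. When $\set{J} = \emptyset$, KKT no longer pins down $b^\star$ from the dual alone, so the cleanest remedy is to fix $f = f^\star$ and solve the remaining one-dimensional convex problem in $b$, which is simply the univariate minimization of $\lambda \rho([\one_n - \hat{Y}(\hat{K}f^\star - b\one_n)]_+)$.
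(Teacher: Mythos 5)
Your proposal is correct and follows essentially the same route as the paper: both reformulate \cref{eq:primal-problem} with slack variables, dualize, use the fact that the conjugate of the support function $\lambda\rho$ is the indicator of $\lambda\amb$ to obtain the constraint $(\alpha+\beta)/\lambda \in \amb$, and recover $f^\star$ and $b^\star$ from stationarity and complementary slackness (the paper merely packages this as Fenchel duality with explicit conjugates $h^*$, $g^*$ and the adjoint $L^*$ rather than minimizing the Lagrangian variable by variable). The one point to tighten is the constraint qualification: since $\set{H}$ is infinite-dimensional, the finite-dimensional Slater theorem you cite does not directly apply, and you should instead invoke a Hilbert-space qualification condition such as $\operatorname{int}(\operatorname{dom} g) \cap L\operatorname{dom} h \neq \emptyset$ as the paper does --- your witness (taking the slack large) is exactly the right one for verifying it.
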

\begin{proof}
    We write \cref{eq:primal-problem} with slack variables first 
    \begin{equation*}
        \begin{alignedat}{2}
            &\minimize_{(f, \threshold, s) \in \set{H} \times \Re \times \Re^n} & \qquad & \frac{1}{2} \nrm{f}_{\set{H}}^2 + \lambda\rho(s) \\
            &\stt && \hat{Y} (\hat{K} f - \one_n \threshold) - \one_n + s \geq 0 \\
            &&& s \geq 0.
        \end{alignedat}
    \end{equation*}

    The next step is to apply Lagrangian duality over Hilbert spaces as presented in \cite[Prop.~19.18]{Bauschke2011}.   
    We first bring the problem in the standard form:
    \begin{equation*}
        \minimize_{x \in \set{G}} \qquad h(x) + g(L x),
    \end{equation*}
    with $\set{G} \dfn \set{H} \times \Re \times \Re^n$ a Hilbert space, elements of which we partition as $x = (f, \threshold, s)$.
    Let $L \colon \set{G} \to \Re^{2n}$ denote the linear operator defined as 
    \begin{equation} \label{eq:ell-definition}
        L x = \left(\hat{Y} \hat{K} f - \hat{Y} \one_n \threshold + s, s\right). 
    \end{equation}
    Its adjoint -- defined implicitly as $L^* \colon \Re^{2n} \to \set{G}$ such that $\<Lx, v\> = \<x, L^* v\>$ -- is given as
    \begin{equation*}
        L^* v = (\hat{K}^* \hat{Y} \alpha,  -\one_n^T \hat{Y} \alpha, \alpha + \beta),
    \end{equation*}
    for $v = (\alpha, \beta)$ and 
    $\hat{K}^* \hat{Y} \alpha = \sum_{i=1}^{n} \alpha_i Y_i \kappa(X_i, \cdot)$. 
    The functions $h \colon \set{G} \to \eRe$ and $g \colon \Re^n \to \eRe$ are given as 
    \begin{align*}
        h(x) &\dfn  \nrm{f}_{\set{H}}^2/2 + \lambda\rho(s)\\
        g(Lx)&\dfn \indi_{\Re^{2n}_+}(Lx - \delta),
    \end{align*}
    with $\delta = (\one_n, 0) \in \Re^{2n}$. 

    First we prove that strong duality holds, a sufficient condition for which is $\mathrm{int} (\dom g) \cap L \dom h$ (cf. \cite[Thm.~15.23, Prop.~6.19(vii)]{Bauschke2011}).
    Note that $\dom g = \Re^{2n}_+ + \delta$ and $\dom h = \set{H} \times \Re \times \Re^n$, keeping in mind that $\dom \rho = \Re^n$ 
    since $\rho$ is coherent. Note that $0 \in \set{H}$ and $0 \in \Re$ and that $L(0, 0, s) = (s, s)$.
    So $\distort = \{(s, s) \colon s \in \Re^n\} \subset L \dom h$. Hence $\mathrm{int} (\dom g) \cap L \dom h \supset (\mathrm{int} (\Re^{2n}_+) + \delta) \cap \distort \neq \emptyset$.

    Consider the convex conjugates $h^*$ and $g^*$. Then
    \begin{align*}
        h^*(\bar{x}) = \nrm{f}_{\set{H}}^2/2 + \indi_{\{0\}}(\bar{\threshold}) + \lambda \rho^*(\bar{s}/\lambda),
    \end{align*}
    where we used the definition of the convex conjugate, the partitioning $\bar{x} = (\bar{f}, \bar{\threshold}, \bar{s}) \in \set{G}$,
    \cite[Prop.~13.16, Prop.~13.20(i)]{Bauschke2011} and seperability of $h$. 
    Since $\rho$ is a support function we have $\rho^* = \indi_{\amb}$. 
    Also, again letting $v = (\alpha, \beta)$,
    \begin{equation*}
        g^*(v) = \indi_{\Re^{2n}_-}(v) + \trans{\one_n} \alpha
    \end{equation*}
    by \cite[Prop.~13.20(ii)]{Bauschke2011}. The dual problem, the value of which equals minus the primal by strong duality, is then given as \cite[Prop.~19.18]{Bauschke2011}:
    \begin{equation*}
        \begin{alignedat}{2}
            &\minimize_{(\alpha, \beta) \in \Re^n \times \Re^n} &\qquad& \frac{1}{2} \nrm{\hat{K}^* \hat{Y} \alpha}_{\set{H}}^2 - \trans{\one_n} \alpha \\
            &\stt && \trans{\one_n} \hat{Y} \alpha = 0, \, (\alpha + \beta)/\lambda \in \amb, \, (\alpha, \beta) \geq 0
        \end{alignedat}
    \end{equation*}
    where we already integrated the indicator functions in the constraints. 
    After adding the minus sign, this is equivalent to the problem in the theorem. 

    We next compute the subgradients. Note that,
    \begin{align} \label{eq:subgradient-h}
        \partial h^*(\bar{x}) = \{\bar{f}\} \times (\Re \cap \{\bar{\threshold}\}^\bot) \times \mathcal{N}_{\amb}(\bar{s}/\lambda)/\lambda,
    \end{align}
    with $\mathcal{N}_{\set{C}}$ denotes the \emph{normal cone} for set $\set{C}$ \cite[Def.~6.37]{Bauschke2011} and 
    $\set{C}^\bot = \{u \colon \<x, u\> = 0, \forall x \in \set{C}\}$ denotes the \emph{orthogonal complement} of $\set{C}$. 
    The first term follows from differentiability. The second term follows from \cite[Ex.~16.12, Ex.~6.39]{Bauschke2011}
    which states $\partial \indi_{\{0\}}(\bar{\threshold}) = \mathcal{N}_{\{0\}}(\bar{\threshold}) = \Re^n \cap \{\bar{\threshold}\}^{\bot}$. 
    The third follows from applying \cite[Ex.~16.12, Cor.~16.42]{Bauschke2011} to $\indi_{\amb} \circ \lambda^{-1} I_n$. 
    Similarly 
    \begin{align}
        \partial g^*(v) &= \Re^n_+ \cap \{v\}^\bot + \delta \nonumber \\
                             &= \{u + \delta \colon u \geq 0, \, \trans{u} v = 0\}, \label{eq:subgradient-g}
    \end{align}
    where we apply \cite[Cor.~16.38]{Bauschke2011}, differentiability of the second term and again \cite[Ex.~16.12, Ex.~6.39]{Bauschke2011}
    to deal with the indicator. 

    By \cite[Prop.~19.17(v), Prop.~19.18(v)]{Bauschke2011} the optimizers $v^\star = (\alpha^\star, \beta^\star)$ and $
    x^\star = (f^\star, \threshold^\star, s^\star)$
    must satisfy
    \begin{align*}
        && L^* \alpha^\star &\in \partial h(x^\star) &&\text{and}& -\alpha^\star &\in \partial g(L x^\star) \\
        &\Leftrightarrow & x^\star &\in \partial h^*(L^* \alpha^\star) &&\text{and}& L x^\star &\in \partial g^*(-\alpha^\star),
    \end{align*}
    where we used \cite[Cor.~16.24]{Bauschke2011} to express the optimality conditions in terms of the subgradients of the conjugates. 
    Plugging in the subgradient determined in \cref{eq:subgradient-h} gives 
    \begin{align*}
        f^* = \hat{K}^* \hat{Y} \alpha^\star = \sum_{i=1}^{n} \alpha^\star_i Y_i \kappa(X_i, \cdot). 
    \end{align*}
    Comparing with \cite[Eq.~7.25]{Scholkopf2002} shows that this is the usual SVM solution. 
    For the threshold $\threshold^\star$ we use \cref{eq:subgradient-g}, giving
    \begin{align*}
        &&&L x^\star \in \partial g^*(-\alpha^\star) \\
        &\Leftrightarrow && \sum_{i=1}^{n} \alpha_i^\star \left( Y_i (f^\star(X_i) - \threshold^\star) - 1 + s_i^\star \right) + \sum_{i=1}^{n} \beta_i^\star s_i^\star = 0,\\
        &&& Y_i (f^\star(X_i) - \threshold^\star) - 1 + s_i \geq 0, \quad \forall i \in [n],\\
        &&& s^\star \geq 0.
    \end{align*}
    Let $\set{J} = \{j \in [n] \colon \alpha_j^\star > 0, \, \beta_j^\star > 0\}$. From the above conditions we have 
    $s_j^\star = 0$ for all $j \in \set{J}$. Similarly we require 
    \begin{equation*}
        Y_j (f^\star(X_j) - \threshold^\star) - 1 = 0, \quad \forall j \in \set{J}.
    \end{equation*}
    Using $Y_j \in \{-1, 1\}$ to mulyiply both sides with $Y_j$ and the characterization of $f^\star$ above results in 
    \begin{equation*}
        b^\star = \sum_{i=1}^n \alpha^\star_i Y_i \kappa(X_i, X_j) - Y_j, \quad \forall j \in \set{J}.
    \end{equation*}
    We can again compare with the classical SVM setting (cf. \cite[Eq.~47.32]{Scholkopf2002} and the discussion at \cite[p.~206]{Scholkopf2002}),
    seeing that the condition is similar. When $\set{J} = \emptyset$, we cannot generate trivial constraints on $b^\star$. In that case, we note the $f^\star$
    is still a valid minimizer, thus $b^\star$ must minimize \cref{eq:primal-problem} for $f = f^\star$.
\end{proof}

For $\bar{\CVAR}$ as in \cref{eq:biased-cvar} we can characterize the ambiguity set $\amb$ efficiently in terms of a polyhedral set as in \cite{Bertsimas2009b}.
So \cref{eq:svm-dual-problem} is a quadratic program. For divergence-based risk measures the ambiguity set $\amb_{\alpha}$ in \cref{eq:ambiguity-phi-div} 
is convex so we can implement it directly. It is polyhedral for the total variation.

\end{arxiv}

\end{document}